\newtheorem{theorem}{Theorem}[section]
\newtheorem{lemma}[theorem]{Lemma}
\newtheorem{proposition}[theorem]{Proposition}
\newtheorem{corollary}[theorem]{Corollary}
\newtheorem{conjecture}[theorem]{Conjecture}
\newtheorem{question}[theorem]{Question}
\theoremstyle{definition}
\newtheorem{definition}[theorem]{Definition}
\newtheorem{example}[theorem]{Example}
\theoremstyle{remark}
\newtheorem{remark}[theorem]{Remark}
\numberwithin{equation}{section}
\newcommand{\Des}{\ensuremath{\mathrm{Des}}}
\newcommand{\SYT}{\ensuremath{\mathsf{SYT}}}
\newcommand{\SShT}{\ensuremath{\mathsf{SShT}}}
\newcommand{\Sym}{\ensuremath{\mathsf{Sym}}}
\newcommand{\QSym}{\ensuremath{\mathsf{QSym}}}
\newcommand{\Peak}{\ensuremath{\mathrm{Peak}}}
\newcommand{\PQSym}{\ensuremath{\mathsf{Peak}}}
\newcommand{\set}{\ensuremath{\mathrm{set}}}
\newcommand{\excise}[1]{}
\newlength\cellsize \setlength\cellsize{15\unitlength}
\newcommand\cellify[1]{\def\thearg{#1}\def\nothing{}%
\ifx\thearg\nothing\vrule width0pt height\cellsize depth0pt%
  \else\hbox to 0pt{\usebox2\hss}\fi%
  \vbox to 15\unitlength{\vss\hbox to 15\unitlength{\hss$#1$\hss}\vss}}
\newcommand\tableau[1]{\vtop{\let\\=\cr
\setlength\baselineskip{-15000pt}
\setlength\lineskiplimit{15000pt}
\setlength\lineskip{0pt}
\halign{&\cellify{##}\cr#1\crcr}}}
\newlength\Cellsize \setlength\Cellsize{25\unitlength}
\newcommand\Cellify[1]{\def\thearg{#1}\def\nothing{}%
\ifx\thearg\nothing\vrule width0pt height\Cellsize depth0pt%
  \else\hbox to 0pt{\usebox3\hss}\fi%
  \vbox to 25\unitlength{\vss\hbox to 25\unitlength{\hss$#1$\hss}\vss}}
\newcommand\Tableau[1]{\vtop{\let\\=\cr
\setlength\baselineskip{-15000pt}
\setlength\lineskiplimit{15000pt}
\setlength\lineskip{0pt}
\halign{&\Cellify{##}\cr#1\crcr}}}
\newlength\Ccellsize \setlength\Ccellsize{25\unitlength}
\newcommand\Ccellify[1]{\def\thearg{#1}\def\nothing{}%
\ifx\thearg\nothing\vrule width0pt height\Ccellsize depth0pt%
  \else\hbox to 0pt{\usebox4\hss}\fi%
  \vbox to 25\unitlength{\vss\hbox to 25\unitlength{\hss$#1$\hss}\vss}}
\newcommand\Ttableau[1]{\vtop{\let\\=\cr
\setlength\baselineskip{-15000pt}
\setlength\lineskiplimit{15000pt}
\setlength\lineskip{0pt}
\halign{&\Ccellify{##}\cr#1\crcr}}}
\begin{document}

%%%%%%%%%%%%%%%%%%%%%%%%%%%%%%%%%%%%%%%%%%%%%%%%%%%%%%%%%%%%
%  TITLE PAGE information
%%%%%%%%%%%%%%%%%%%%%%%%%%%%%%%%%%%%%%%%%%%%%%%%%%%%%%%%%%%%

\title[Pattern-Avoiding Peak Functions]{Pattern-Avoiding Peak Functions}

\author[M. Slattery-Holmes]{Matthew Slattery-Holmes}
\address{Department of Mathematics and Statistics, University of Otago, 730 Cumberland St., Dunedin 9016, New Zealand}
\email{slama077@student.otago.ac.nz}

\subjclass[2020]{05A05, 05E05, 05E10}

\date{20 Oct 2025}

\keywords{Quasisymmetric functions, pattern avoidance, permutations, peak functions, tableaux}

\maketitle

\begin{abstract}
In 2020, Hamaker, Pawlowski, and Sagan introduced the \textit{pattern quasisymmetric functions}, which are quasisymmetric functions associated with pattern-avoidance classes of permutations, and defined via expansions in fundamental quasisymmetric functions. They determined which subsets of the symmetric group $\mathfrak{S}_3$ index pattern quasisymmetric functions that are symmetric, and showed that these symmetric pattern quasisymmetric functions are also Schur-positive. They then posed the question of when symmetry or Schur $P$-positivity occur for analogous quasisymmetric functions defined in terms of peak functions. In this work we answer this question, that is, we identify precisely which subsets of $\mathfrak{S}_3$ give a \textit{pattern-avoiding peak function} that is symmetric, and give explicit formulas for the positive expansion into the closely-related Schur $Q$-functions.
    
\end{abstract}

\section{Introduction}

In 1911, Schur \cite{schur1911darstellung} introduced what are now known as Schur $P$- and Schur $Q$-functions, in order to study the projective representations of the symmetric and alternating groups. Since then, they have been the focus of much study. The peak functions, which span the peak subalgebra of the quasisymmetric functions, were introduced by Stembridge during his study of enriched $P$-partitions in \cite{stembridge1997enriched}, where he also gives a formula for the positive expansion of Schur $Q$-functions in terms of peak functions. 

We let $\mathfrak{S}_n$ denote the set of permutations of size $n$. If we consider permutations as sequences of the integers 1 through $n$, a \textit{pattern} is a subsequence viewed only in terms of the relative order of its elements. The \textit{avoidance class} of a set of permutations $\Pi$ is the set of permutations that contain no pattern in $\Pi$. For more detailed definitions, see Section \ref{patternavoidancedefn}.

The \textit{pattern quasisymmetric functions}, defined by Hamaker, Pawlowski, and Sagan in \cite{HPS20}, are quasisymmetric functions defined in terms of avoidance classes of permutations. In particular, given some set of permutations $\Pi$, the pattern quasisymmetric function $\mathcal{Q}_n(\Pi)$ is the sum of the fundamental quasisymmetric functions indexed by the descent sets of permutations of size $n$ that avoid $\Pi$. The authors of \cite{HPS20} establish for which avoidance classes $\mathrm{Av}(\Pi)$, for $\Pi \subset \mathfrak{S}_3$, the associated pattern quasisymmetric function is symmetric for all $n$. In these cases, they show that the pattern quasisymmetric functions are also Schur-positive for all $n$, meaning that they expand into the Schur function basis of symmetric functions with only positive coefficients. They then raise the question of when an analogous function $R_n(\Pi)$, defined as the sum of peak functions indexed by peak sets of permutations that avoid $\Pi$, is symmetric or Schur $P$-positive. We call the functions $R_n(\Pi)$ \textit{pattern-avoiding peak functions}. 

We answer this question, determining for which $\Pi\subset\mathfrak{S}_3$ such that $\{123,321\}\not\subset \Pi$, the function $R_n(\Pi)$ is symmetric, and in these cases, we give the expansion into Schur $Q$-functions. As in \cite{HPS20} we impose this additional condition on $\Pi$ since, by the Erd\"os--Szekeres theorem \cite{erdos1935combinatorial}, if $\{123,321\}\subset\Pi$, then $\mathrm{Av}_n(\Pi) = \emptyset$ for all $n\geqslant 5$.

\begin{theorem}\label{MainTheoremText}
If $\Pi \subset \mathfrak{S}_3$ and $\{123,321\}\not \subset \Pi$, the pattern-avoiding peak function $R_n(\Pi)$ is symmetric for all $n$ precisely when $\Pi$ is one of the sets given in the left column of Table 1. In all of these cases $R_n(\Pi)$ is Schur $Q$-positive, and for $n\geqslant 3$, the corresponding expansion of $R_n(\Pi)$ into Schur $Q$-functions is given in the right column of Table \ref{mainthm}. 

\end{theorem}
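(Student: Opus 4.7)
The strategy is to separately establish two directions: (i) for each $\Pi$ in Table \ref{mainthm}, verify the claimed Schur $Q$-expansion of $R_n(\Pi)$ for $n\geqslant 3$ (which automatically yields symmetry and $Q$-positivity); and (ii) for each $\Pi \subset \mathfrak{S}_3$ with $\{123,321\}\not\subset \Pi$ but $\Pi$ absent from Table \ref{mainthm}, exhibit an $n$ for which $R_n(\Pi)$ fails to be symmetric. Before addressing either direction, I would exploit the natural involutions on $\mathfrak{S}_n$ that interact predictably with peak sets, in particular the reverse-complement map $\sigma \mapsto \sigma^{\mathrm{rc}}$, which preserves peak sets exactly. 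These symmetries group the $64$ candidate subsets $\Pi$ into a much smaller list of equivalence classes that yield identical $R_n(\Pi)$.

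For direction (i), the main tool is Stembridge's identity from \cite{stembridge1997enriched} expressing $Q_\lambda$ as a nonnegative sum of peak functions indexed by standard shifted tableaux of shape $\lambda$. Each row of Table \ref{mainthm} then reduces to constructing a peak-set-preserving bijection between $\mathrm{Av}_n(\Pi)$ and a disjoint union of sets of standard shifted tableaux whose shapes and multiplicities match the prescribed right-hand side. For single-pattern classes such as $\mathrm{Av}(132)$ or $\mathrm{Av}(231)$, I would adapt Catalan-family bijections together with shifted-insertion algorithms to track peak sets. For multi-pattern classes, $\mathrm{Av}_n(\Pi)$ is small enough that a direct case analysis organized by peak set is feasible.

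For direction (ii), after the symmetry reduction only a handful of excluded $\Pi$ remain. For each, I would compute $R_n(\Pi)$ in the monomial quasisymmetric basis for small $n$ and exhibit two compositions that are rearrangements of one another but whose coefficients differ; this contradicts symmetry immediately. Existing software for quasisymmetric functions can assist, but verification by hand at $n=4$ or $n=5$ should be tractable.

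The chief obstacle lies in direction (i): the peak algebra is more rigid than the algebra of fundamental quasisymmetric functions, so the descent-preserving bijections of Hamaker--Pawlowski--Sagan \cite{HPS20} do not transfer wholesale. The most delicate rows will be those in which the right-hand side is a sum of several $Q_\lambda$'s with multiplicities greater than one, requiring a careful partition of $\mathrm{Av}_n(\Pi)$ into blocks each matching a specific shifted tableau. I anticipate needing an alternative algebraic route via generating functions refined by the peak statistic for the more intricate cases, rather than a purely bijective proof throughout.
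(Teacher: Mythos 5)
There is a genuine gap at the foundation of your reduction step. The reverse--complement map does \emph{not} preserve peak sets: complementation turns $\pi_{i-1}<\pi_i>\pi_{i+1}$ into $\pi_{i-1}>\pi_i<\pi_{i+1}$, so $\sigma\mapsto\sigma^{rc}$ sends peaks to valleys (at reflected positions), not peaks to peaks. Consequently the ``much smaller list of equivalence classes yielding identical $R_n(\Pi)$'' that you build on is false, and it leads you to concretely wrong conclusions: you cite $\mathrm{Av}(132)$ and $\mathrm{Av}(231)$ as single-pattern classes to be handled in direction (i), but these are precisely the two single patterns in $\mathfrak{S}_3$ for which $R_n(\Pi)$ is \emph{not} symmetric ($R_5(132)$ already fails), even though $132=213^{rc}$ and $R_n(213)$ is Schur $Q$-positive. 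The paper makes exactly this point in the remark following Lemma \ref{reverselemma}. What actually holds is much weaker and conditional: Lemma \ref{reverselemma} shows only that \emph{if} $R_n(\Pi)$ is symmetric then $R_n(\Pi)=R_n(\Pi^r)$, proved by tracking how reversal acts on the monomial expansion of peak functions; it cannot be used to transfer symmetry from $\Pi$ to $\Pi^r$ a priori, and reverse--complement enters only when composed with a second peak-to-valley bijection (the Claesson--Kitaev map) so that the two reversals of orientation cancel, yielding a genuine peak-preserving bijection between $\mathrm{Av}_n(321)$ and $\mathrm{Av}_n(213)$.

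The remainder of your plan is broadly consonant with the paper: Stembridge's expansion $Q_\lambda=\sum_{T\in\SShT(\lambda)}K_{\Peak(T)}$ is indeed the engine, the peak-preserving correspondences with standard shifted tableaux are realized via Sagan--Worley insertion (whose interaction with peaks is the technical heart, Proposition \ref{insertionpeaks}), the many-pattern classes are handled by small direct arguments, and an algebraic route (a shuffle-product formula obtained from a homomorphism into the peak algebra) replaces bijections for the shuffle cases. Direction (ii) by small-$n$ monomial computations matches what the paper asserts without detail. But as written, your proposal would certify symmetry for classes where it fails, so the equivalence-class step must be replaced by the conditional reversal lemma together with explicitly constructed peak-preserving bijections.
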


While full definitions will be given in the following section, in Table \ref{mainthm}, $Q_\lambda$ denotes the Schur $Q$-function indexed by a strict partition $\lambda$, $\ell(\lambda)$ is the length of $\lambda$, and $|\SShT(\lambda)|$ is the number of standard shifted tableaux of shape $\lambda$. We elected to give expansions in terms of Schur $Q$-functions rather than Schur $P$-functions because it simplifies the tableau-based arguments used to prove parts of Theorem \ref{MainTheoremText}. This also answers the question of Schur $P$-positivity, since $Q_\lambda = 2^{\ell(\lambda)}P_\lambda$, where $P_\lambda$ is the Schur $P$-function indexed by $\lambda$. We do not give proofs that $R_n(\Pi)$ is not symmetric for other subsets of $\mathfrak{S}_3$, but in all cases counterexamples can be found for small $n$. We give the expansions of $R_n(\Pi)$ into Schur $Q$-functions for $n\geqslant 3$. It is easy to check that if $\Pi\subset \mathfrak{S}_3$ and $n\in\{1,2\}$, then $R_n(\Pi) = nQ_{(n)}$.

\renewcommand{\arraystretch}{1.5}
\begin{table}[ht]
\centering
\caption{Theorem \ref{MainTheoremText}}
\label{mainthm}

\begin{tabular}{@{} p{0.55\linewidth} l @{}}
\toprule
\textbf{Avoidance class patterns $\boldsymbol{\Pi}$} & $\boldsymbol{R_n(\Pi)}$ \textbf{for} $\boldsymbol{n \geq 3}$ \\
\midrule

$\emptyset$ 
& $\displaystyle \sum_{\lambda\vdash n} 2^{n-\ell(\lambda)}|\SShT(\lambda)|Q_\lambda$ \\
\\
\makecell[l]{%
$\{123\}, \{213\}, \{312\}, \{321\}$}
& $\displaystyle\sum_{k=0}^{\lfloor n/2 \rfloor} \left({n\choose k+1} - {n\choose k-1}\right)Q_{(n-k,k)}$ \\
\\
\makecell[l]{%
$\{213,132\}, \{231,312\}, \{123,132\},$ \\ 
$\{132,312\}, \{213,231\}, \{231,321\}$}
& $\displaystyle\sum_{k=0}^{\lfloor n/2 \rfloor} (n-2k)Q_{(n-k,k)}$ \\
\\
$\{132,231\}$ 
& $2^{n-1}Q_{(n)}$ \\
\\
\makecell[l]{%
$\{123,132,312\}, \{123,213,231\}, \{132,312,321\},$ \\ 
$\{132,213,321\}, \{132,213,312\}, \{123,231,312\},$ \\ 
$\{213,231,321\}, \{213,231,312\}$}
& $2Q_{(n)}+Q_{(n-1,1)}$ \\
\\
\makecell[l]{%
$\{123,132,231\}, \{132,213,231\},$ \\ 
$\{132,231,312\}, \{132,231,321\}$}
& $nQ_{(n)}$ \\
\\
\makecell[l]{%
$\{123,132,213,231\}, \{123,132,231,312\},$ \\ 
$\{132,213,231,312\}, \{132,213,231,321\},$ \\ 
$\{132,231,312,321\}$}
& $2Q_{(n)}$ \\
\\
\makecell[l]{%
$\{123,132,213,231,312\},$ \\ 
$\{132,213,231,312,321\}$}
& $Q_{(n)}$ \\

\bottomrule
\end{tabular}
\end{table}

\section{Background}
Given a positive integer $n$, we denote the set $\{1,2,\ldots,n\}$ by $[n]$. A \textit{partition} $\gamma$ of $n$, written $\gamma \vdash n$, is a weakly decreasing sequence of positive integers that sum to $n$. A partition whose elements are strictly decreasing is called a \textit{strict partition}, and we will denote these by $\lambda$. If $\gamma= (\gamma_1,\gamma_2,\ldots,\gamma_k)$, then its \textit{length} is $\ell(\gamma)=k$. 

Given a partition $\gamma$, the \textit{Young diagram of shape $\gamma$} is the left-justified array of cells with $\gamma_1$ cells in the first (lowest) row, $\gamma_2$ in the second, and so on. For a strict partition $\lambda$, the  \textit{shifted diagram of shape $\lambda$} is obtained by taking the Young diagram of shape $\lambda$, and indenting the $i^{th}$ row one space to the right relative to the $(i-1)^{th}$ row. 

A \textit{standard Young tableau}, or $\SYT$, is a filling of the Young diagram of shape $\gamma \vdash n$ with numbers $1,2,\ldots, n$ such that rows increase from left to right, and columns increase from bottom to top. The set of $\SYT$ of shape $\gamma$ is referred to as $\SYT(\gamma)$.

A \textit{standard shifted tableau}, or $\SShT$, is a filling of the shifted diagram of a strict partition $\lambda \vdash n$ with the numbers $1,2,\ldots,n$ such that rows increase from left to right, and columns increase from bottom to top. The set of $\SShT$ of shape $\lambda$ is called $\SShT(\lambda)$.

The \textit{descent set} of a tableau filling $T\in \SYT(\lambda)$ or $T\in \SShT(\lambda)$ is defined as
\[
\Des(T) = \{\,i : i+1 \text{ is strictly above } i \,\}.
\]

Given a set $S$ of integers, the \textit{peak set} of $S$ is the set \[\Peak(S) = \{s\in S: s\neq 1, s-1\notin S\}.\] We will frequently refer to the peak set of the descent set of a standard shifted tableau $T$, so we adopt the abbreviated notation $\Peak(T):=\Peak(\Des(T))$. We refer to the elements of $\Peak(T)$ as the \textit{peaks} of $T$, and $\Peak(T)$ itself as the \textit{peak set} of $T$.

\begin{example}
Let $\lambda = (3,2,1)$. The Young diagram and shifted diagram of shape $\lambda$ are, respectively, \[\begin{array}{c@{\hskip \cellsize}c@{\hskip \cellsize}c@{\hskip \cellsize}c@{\hskip \cellsize}c@{\hskip \cellsize}c}\tableau{~\\~&~\\~&~&~}\end{array}\!\text{ and }~~\begin{array}{c@{\hskip \cellsize}c@{\hskip \cellsize}c@{\hskip \cellsize}c@{\hskip \cellsize}c@{\hskip \cellsize}c}\tableau{&&~\\&~&~\\~&~&~}\end{array}\!\!\!\!\!.\]
The standard shifted tableaux of shape $(3,2,1)$ are the fillings \[\SShT(3,2,1) = \left\{\begin{array}{c@{\hskip \cellsize}c@{\hskip \cellsize}c@{\hskip \cellsize}c@{\hskip \cellsize}c@{\hskip \cellsize}c}\tableau{&&6\\&4&5\\1&2&3} \end{array}\!\!\!\!\!\!~,~\!\!\!\begin{array}{c@{\hskip \cellsize}c@{\hskip \cellsize}c@{\hskip \cellsize}c@{\hskip \cellsize}c@{\hskip \cellsize}c}\tableau{&&6\\&3&5\\1&2&4} \end{array}\!\!\!\!\!\!\right\}\] which have descent sets $\{3,5\}$ and $\{2,4,5\}$, and peak sets $\{3,5\}$ and $\{2,4\}$, respectively. 

\end{example}

\subsection{Quasisymmetric and symmetric functions}

A \textit{composition} $\alpha$ of $n$ is a finite sequence of positive integers that sums to $n$, and we denote this by $\alpha\vDash n$. The algebra of formal power series in countably many variables is denoted $\mathbb{C}[[x_1,x_2,\ldots]]$. The algebra of \textit{quasisymmetric functions}, denoted $\QSym$, is defined as those elements $f \in \mathbb{C}[[x_1,x_2,\ldots]]$ such that given any composition $(\alpha_1,\alpha_2,\ldots,\alpha_k)$, the coefficient of the term $x_1^{\alpha_1}\cdots x_k^{\alpha_k}$ in $f$ is equal to the coefficient of $x_{i_1}^{\alpha_1}\cdots x_{i_k}^{\alpha_k}$ in $f$, for all index sequences $i_1<i_2<\cdots<i_k$ of positive integers. The \textit{symmetric functions} are a subalgebra of $\QSym$, denoted $\Sym$, defined as all functions $f\in \mathbb{C}[[x_1,x_2,\ldots]]$ such that $f$ is unchanged under permuting the variables. 

The \textit{monomial quasisymmetric functions} \cite{gessel1984multipartite} form a basis for $\QSym$. Given any composition $\alpha = (\alpha_1, \ldots , \alpha_k)\vDash n$, the monomial quasisymmetric function $M_\alpha$ is 
\[M_\alpha = \sum_{1\le i_1<i_2<\cdots <i_k}x_{i_1}^{\alpha_1}x_{i_2}^{\alpha_2}\cdots x_{i_k}^{\alpha_k}.\]

Given compositions $\alpha$ and $\beta$, we say $\beta$ \textit{refines} $\alpha$ if one can produce $\alpha$ by adding together adjacent parts of $\beta$. Given a composition $\beta = (\beta_1,\beta_2,\ldots,\beta_k)\vDash n$, let $\set(\beta) = \{\beta_1,\beta_1+\beta_2,\ldots,\beta_1+\cdots+\beta_{k-1}\}\subseteq [n-1]$. 

Another important basis for $\QSym$ that we will consider in this work is the \textit{fundamental quasisymmetric functions} \cite{gessel1984multipartite}. For a composition $\alpha\vDash n$, the fundamental quasisymmetric function corresponding to $\alpha$ is \[F_\alpha = \sum_\beta M_\beta,\] where the sum is taken over all $\beta$ that refine $\alpha.$

A set of integers $S$ is said to be a \textit{peak set} if $1\notin S$ and for every $i\in S$, $i-1\notin S$. The \textit{peak algebra} $\PQSym$ is a subalgebra of $\QSym$. Bases of $\PQSym$ are indexed by peak sets, and one notable basis is the \textit{peak functions} \cite{stembridge1997enriched} which for peak sets $S\subset[n-1]$ are given by \[K_{S,n} = 2^{|S|+1} \sum_{\substack{ \beta \vDash n \\ S \subset \set(\beta) \triangle \big(\set(\beta)+1\big)}} F_\beta,\] where $\triangle$ represents the symmetric difference of sets, and $\set(\beta)+1$ is shorthand for $\{x+1:x\in \set(\beta)\}$. When the degree of a peak function is clear from the context, we drop the $n$ and simply write $K_{S}$.

The \textit{Schur $Q$-functions} span the intersection of $\PQSym$ and $\Sym$. Given a strict partition $\lambda \vdash n$, the expansion of the Schur $Q$-function $Q_\lambda$ in terms of peak functions is \cite{assaf2014shifted,stembridge1997enriched}
\begin{equation}\label{SchurQdefn}
    Q_\lambda = \sum_{T\in \SShT(\lambda)}K_{\Peak(T)}.
\end{equation}

\subsection{Pattern avoidance in permutations}\label{patternavoidancedefn}
The symmetric group of permutations of $[n]$ is denoted $\mathfrak{S}_n$, and we say $\pi\in \mathfrak{S}_n$ is a permutation of \textit{size} $n$. A permutation $\pi\in \mathfrak{S}_n$ is written in one-line notation as $\pi = \pi_1\pi_2\ldots\pi_n$, where $\pi_i= \pi(i)$ is the $i^{th}$ \textit{element} of $\pi$. We denote the \textit{decreasing permutation} of size $j$ by $\delta_j$, and the \textit{increasing permutation} by $\iota_j$, i.e. $\delta_j = j(j-1)\ldots 21$ and $\iota_j = 12\ldots(j-1)j$. 

Given a permutation $\pi = \pi_1\pi_2\ldots\pi_n$, the \textit{reverse} permutation $\pi^r$ is $\pi_n\pi_{n-1}\ldots\pi_1$. Given a collection of permutations $\Pi$, we let  $\Pi^r = \{\pi^r:\pi\in \Pi\}$. 

The \textit{descent set of a permutation} is the set of indices \[\Des(\pi) = \{i:\pi_i > \pi_{i+1}\}.\] As with tableaux, we denote the peak set of the descent set of a permutation $\pi$ by $\Peak(\pi)$ rather than $\Peak(\Des(\pi))$. We call $\Peak(\pi)$ the \textit{peak set} of $\pi$, and refer to the elements of $\Peak(\pi)$ as the \textit{peaks} of $\pi$. In order to reduce excessive notation we have elected to use $\Des$ and $\Peak$ to refer to descent sets and peak sets respectively of both permutations and tableaux. From context it should be clear which one is meant.

Given a permutation $\pi = \pi_1\pi_2\cdots\pi_n$, let $\rho = \pi_{i_1}\cdots \pi_{i_k}$ denote some subsequence of size $k\leq n$. We say $\rho$ \textit{standardizes} to a permutation $st(\rho)$ of size $k$ if $st(\rho)$ is the permutation we obtain when we replace the smallest element of $\rho$ with a 1, the second smallest with a 2, and so on. In this manner, we say that $\pi$ \textit{contains} the permutation $st(\rho)$ as a \textit{pattern}. Given a permutation $\pi$ of size $n$ and a permutation $\tau$ of size $k$, we say $\pi$ \textit{avoids} $\tau$ if $\pi$ does not contain $\tau$.

\begin{example}
The permutation $\sigma = 316245$ contains the pattern 132 via the subsequences 364, 365, and 162. It avoids the pattern $\delta_3$. It has descent set $\Des(\sigma) = \{1,3\}$ and peak set $\Peak(\sigma) = \{3\}$. The reverse is $\sigma^r = 542613$, and $\Peak(\sigma^r) = \{4\}$.  
\end{example}
The subject of permutation pattern containment and avoidance has been the focus of a great deal of research in recent years, see for example \cite{Bon08,Kit11} and the references therein.

Given a collection of permutations $\Pi$, the \textit{avoidance class}, $\mathrm{Av}(\Pi)$, is the set of permutations of any size that do not contain any pattern in $\Pi$, and the set of all permutations in $\mathrm{Av}(\Pi)$ that have size $n$ is denoted $\mathrm{Av}_n(\Pi)$. Two avoidance classes (equivalently, two sets of permutations) are said to be \textit{Wilf-equivalent} if the avoidance classes contain the same number of permutations of size $n$ for all $n$. That is, the sets of permutations $\Pi$ and $\Pi'$ are Wilf-equivalent if $|\mathrm{Av}_n(\Pi) | = |\mathrm{Av}_n(\Pi')|$ for all $n$. 

\subsection{Pattern-avoiding peak functions}

\begin{definition}\cite{HPS20}
Given a set of permutations $\Pi$, the \textit{pattern-avoiding peak function} of degree $n$ corresponding to $\Pi$ is \[R_n(\Pi) = \sum_{\pi\in \mathrm{Av}_n(\Pi)}K_{\Peak(\pi)}.\] 
\end{definition}

\begin{example}
For $\Pi = \{1234,1243,2413,3142,3412,4123\}$ the interested reader may use a computer to verify that \[R_6(\Pi) = 10Q_{(6)}+12Q_{(5,1)}+8Q_{(4,2)}-Q_{(3,2,1)}.\]
This is an example of a pattern-avoiding peak function that is symmetric but not Schur $Q$-positive.

\end{example}

\subsection{Insertion algorithms}

In this work we will make use of two insertion algorithms that map a word $w = w_1\ldots w_k$ consisting of positive integers to pairs of tableaux. The first is \textit{Robinson--Schensted--Knuth insertion}, commonly abbreviated to $\mathrm{RSK}$ insertion \cite{knuth1970permutations,robinson1938representations,schensted1961longest}, which gives a bijection between elements of $\mathfrak{S}_n$ and pairs of standard Young tableaux with $n$ cells, of the same shape: \[\mathrm{RSK}:\pi\mapsto (P(\pi),Q(\pi)),\] where $P(\pi)$ is called the insertion tableau, and $Q(\pi),$ the recording tableau.

We will now describe $\mathrm{RSK}$ insertion on permutations. Let $\pi = \pi_1\pi_2\cdots \pi_n$ be a permutation of size $n$. We construct the pair of tableaux $(P(\pi),Q(\pi))$ using the $\mathrm{RSK}$ insertion algorithm, as follows. 

We begin with empty tableaux $P_0$ and $Q_0$. 

For $i\in [n]$, we create $P_i$ from $P_{i-1}$ by inserting $x = \pi_i$ using \textit{row insertion}. The process begins with row insertion of $x$ into row 1. To row insert $x$ into some row, locate the smallest (leftmost) entry $y$ in that row such that $y>x$. If no such $y$ is found, append $x$ to the rightmost end of the row, creating a new cell. If such an entry $y$ is found, replace $y$ with $x$, (which we call \textit{bumping} $y$). Then proceed to insert $y$ into the row immediately above that which it was bumped from. 

Once an entry is placed into a new cell at the rightmost end of a row, this new tableau is $P_i$. Create the new recording tableau $Q_i$ from $Q_{i-1}$ by adding a cell containing $i$ to $Q_{i-1}$ in the same position as the the new cell that was created in $P_i$. 

After inserting $\pi_i$ for all $i\in [n]$ we set 
    \[
    P(\pi)=P_n \quad \text{and} \quad Q(\pi)=Q_n.
    \]

The other insertion algorithm that we will use is \textit{Sagan--Worley insertion} \cite{sagan1987shifted,worley1984theory}. This gives a bijection between $\mathfrak{S}_n$ and pairs of shifted tableaux $(R(\pi),S(\pi))$ with $n$ cells, of the same shape, where the insertion tableau $R(\pi)$ is a standard shifted tableau, and the recording tableau $S(\pi)$ is a \textit{marked standard shifted tableau} of the same shape as $R(\pi)$, i.e. a standard shifted tableau whose entries off the main diagonal may be marked or unmarked. In the Sagan--Worley insertion algorithm, there are two types of insertion, a row insertion analogous to that of $\mathrm{RSK}$  insertion, and a column insertion that occurs after an entry has been bumped from the main diagonal. We describe the Sagan--Worley insertion algorithm to construct $(R(\pi),S(\pi))$ now.

Let $\pi = \pi_1\pi_2\cdots\pi_n$ be a permutation of size $n$. Begin with empty tableaux $R_0$ and $S_0$. For $i\in [n]$, create $R_i$ from $R_{i-1}$ by inserting $x = \pi_i$. The process begins with \textit{row insertion}, inserting $x$ into row 1 by locating the smallest entry $y>x$. If no such $y$ is found, append $x$ to the end of the row, creating a new cell. If such an entry $y$ is found, replace $y$ with $x$, which is called \textit{bumping} $y$. If $y$ was bumped from any cell other than the leftmost cell in its row (i.e. not on the main diagonal of $P_{i-1}$), then row insert $y$ into the next higher row. We refer to bumping that is followed by row insertion as \textit{row bumping}.

If $y$ was bumped from the main diagonal, we switch to \textit{column insertion}. Scan the column immediately to the right of the one which $y$ was bumped from, searching for the smallest entry $z$ in this column such that $z>y$. If no such $z$ exists, place $y$ into a new cell at the top of the column. If such a $z$ exists, bump $z$ and replace it with $y$, then column insert $z$ into the next column to the right. We refer to bumping that is followed by column insertion as \textit{column bumping}.

Once an entry is placed into a newly created cell, the shifted tableau created is $R_i$. Create the new recording tableau $S_i$ from $S_{i-1}$ by adding a cell containing $i$ to $S_{i-1}$ in the position of the new cell created in $R_i$. If column insertion occurred at any point between the insertion of $\pi_i$ and creating a new cell, then we mark the entry $i$ in the recording tableau. 

After inserting $\pi_i$ for all $i\in [n]$ we set 
    \[
    R(\pi)=R_n \quad \text{and} \quad S(\pi)=S_n,
    \]
and we say \[\mathrm{SW}:\pi\mapsto(R(\pi),S(\pi)).\]

\begin{example}
    Consider the permutation $\pi = 4612537\in \mathrm{Av}_7(321)$. Performing $\mathrm{RSK}$ insertion on $\pi$ we get \[P(\pi) =  \begin{array}{c@{\hskip \cellsize}c@{\hskip \cellsize}c@{\hskip \cellsize}c@{\hskip \cellsize}c@{\hskip \cellsize}c}\tableau{6\\4&5\\1&2&3&7}\end{array} \!\!\!\!\!,~~ Q(\pi) = \begin{array}{c@{\hskip \cellsize}c@{\hskip \cellsize}c@{\hskip \cellsize}c@{\hskip \cellsize}c@{\hskip \cellsize}c} \tableau{6\\3&4\\1&2&5&7}\end{array}\!\!\!\!\!.\]
    Performing Sagan--Worley insertion on $\pi$ we get \[R(\pi) = \begin{array}{c@{\hskip \cellsize}c@{\hskip \cellsize}c@{\hskip \cellsize}c@{\hskip \cellsize}c@{\hskip \cellsize}c}\tableau{&&6\\&4&5\\1&2&3&7}\end{array} \!\!\!\!\!,~~ S(\pi) =  \begin{array}{c@{\hskip \cellsize}c@{\hskip \cellsize}c@{\hskip \cellsize}c@{\hskip \cellsize}c@{\hskip \cellsize}c}\tableau{&&6\\&4&5\\1&2&3'&7}\end{array}\!\!\!\!\!.\]
    
\end{example}

\section{Proof of Theorem \ref{MainTheoremText}}
In this section we will prove Theorem \ref{MainTheoremText}. We proceed through Table 1 row by row, beginning with the top row. 

\subsection{The empty set}

\begin{proposition}\label{insertionpeaks}
When performing Sagan--Worley insertion on a permutation $\pi$, the peaks of the insertion tableau created are precisely those elements $p$ of $\pi$ such that $p$ is right of both $p-1$ and $p+1$ in $\pi$. 
\end{proposition}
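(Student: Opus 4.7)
My plan is to prove the proposition by induction on $n = |\pi|$, analyzing carefully how each Sagan--Worley insertion step affects the rows of consecutive values in the growing insertion tableau.

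First, I would reformulate the peak condition in tableau-theoretic terms. Unpacking the definitions, $p \in \Peak(R(\pi))$ if and only if $p+1$ lies in a row strictly above $p$ in $R(\pi)$, while $p-1$ lies in the same row as $p$ or a higher row (equivalently, $p$ does not lie strictly above $p-1$). So the proposition asks that these two row inequalities hold simultaneously precisely when $p$ appears in $\pi$ to the right of both $p-1$ and $p+1$.

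The base cases $n \leq 2$ are vacuous, since no $p \in [2,n-1]$ exists. For the inductive step, write $\pi = \pi^- \cdot x$ with $x = \pi_n$, so that $R(\pi)$ arises from $R(\pi^-)$ by one further Sagan--Worley insertion of $x$. Applying the inductive hypothesis (after standardizing $\pi^-$) gives us the peaks of $R(\pi^-)$. The critical observation about the final insertion is that the row of each entry either stays put or changes in a controlled way along the bumping path of $x$: row-bumping moves a bumped entry up by exactly one row, and column-bumping (triggered when a diagonal entry is bumped) redistributes entries across columns. Entries off the bumping path are unaffected. Since appending $x$ to $\pi^-$ only alters the comparison ``$p$ is right of $p-1$ and $p+1$ in $\pi$'' for those $p$ with $x \in \{p-1, p, p+1\}$, I only need to track a small number of cases per insertion, namely whether $x$ equals $p-1, p$, or $p+1$, and which of $p-1, p, p+1$ lie on $x$'s bumping path.

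The main obstacle is handling the column-bumping phase, which has no counterpart in classical RSK. Column-bumping shuffles entries across columns rather than simply moving them up by one row, so its effect on the row of a particular value $p-1, p,$ or $p+1$ is more intricate than in the row-bumping-only setting. I expect most of the technical work to involve showing that even through column-bumping, the peak characterization remains aligned with the ``$p$ right of $p-1, p+1$ in $\pi$'' condition, likely via a careful description of the shape of the bumping path once it transitions from the row phase to the column phase.
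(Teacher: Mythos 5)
Your plan identifies the right technical heart of the problem (how row- and column-bumping move the values $p-1$, $p$, $p+1$ relative to one another), but as written it defers exactly that content: the sentence ``I expect most of the technical work to involve showing that even through column-bumping, the peak characterization remains aligned\dots'' is the entire proof, and none of it is carried out. The paper's argument consists precisely of this case analysis: it shows that once $p$ is weakly below $p-1$ and strictly below $p+1$, every possible subsequent event (a row bump or a column bump of any of $p-1$, $p$, $p+1$) preserves this configuration, and dually that a non-peak $q$ can never be pushed into that configuration; each case requires a specific geometric observation (e.g.\ that a column-bumped entry moves to a weakly lower row in the next column, that an entry row-bumped past $p-1$ forces $p-1$ to already be higher, etc.). Without these cases your proposal is an outline, not a proof.

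Two further points would need repair even to set up your induction. First, the claim that appending $x=\pi_n$ only affects the peak status of $p$ with $x\in\{p-1,p,p+1\}$ is true of the \emph{word}-side condition, but the single insertion of $x$ can disturb the tableau positions of arbitrarily many values along its bumping path; you must therefore also prove that for every $p$ with $x\notin\{p-1,p,p+1\}$ the relative rows of $p-1,p,p+1$ change only in ways that preserve peak/non-peak status --- which is again the full case analysis, now relocated inside the inductive step rather than avoided by it. Second, the inductive hypothesis applies to $R(\mathrm{st}(\pi^-))$, where standardization makes the values $x-1$ and $x+1$ consecutive; the peaks of $R(\pi^-)$ reported by the hypothesis are therefore computed with respect to a consecutiveness relation that differs from that of $\pi$ exactly at the values you most need to control. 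Neither of these is fatal, but both must be addressed explicitly; the paper sidesteps them by running a direct, non-inductive argument that tracks each candidate $p$ through the whole insertion process.
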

\begin{proof}

Let $P = \{\pi_i: \pi_i$ is right of both $\pi_i-1$ and $\pi_i+1$ in $\pi$\}. We will verify that for all $p\in P$, $p$ is weakly below $p-1$ and strictly below $p+1$ (which we shall refer to as being in \textit{peak position}) in the insertion tableau obtained by inserting $p$, that $p$ remains in peak position following all subsequent insertions, and that this condition is not met for any $q\notin P$. When performing Sagan--Worley insertion on $\pi$, we will insert both $p-1$ and $p+1$ before $p$. This means that $p+1$ cannot be in the first position of the bottom row when $p$ is inserted, because if an entry larger than $p-1$ was here when $p-1$ was inserted, $p-1$ would bump that entry. If $p+1$ is still in the bottom row when $p$ is inserted, $p$ will bump it to a higher row. Thus, immediately after inserting $p$, $p$ is in peak position.

Next, we will consider what may happen if $p$ is bumped while weakly below $p-1$ and strictly below $p+1$. It will be useful to note that when column insertion occurs, an entry that is bumped inserts into the next column to the right, which also implies that that column insertion can never increase the row index of a bumped entry. If $p$ is bumped by column insertion, then $p$ will place into a weakly lower row, and remain in peak position. If $p$ is bumped by row insertion, then the entry that bumped $p$ did not bump $p-1$, meaning that $p-1$ must already be in a higher row. This means that the leftmost entry of the row $p$ bumps into must be less than or equal to $p-1$. In particular, if $p+1$ is in this row, it is not in the first position, and $p$ will bump it to a higher row. Therefore, if $p$ is row bumped, it remains in peak position. 

If $p-1$ is bumped by row insertion, it will place into a higher row, which doesn't create any issue. Suppose $p-1$ is bumped by column insertion. Prior to being bumped, $p-1$ is either in the same row as $p$, or it is strictly above $p$ and strictly to the left. If $p-1$ is in the same row as $p$, it will column bump $p$, which will move right and weakly downwards, remaining in peak position. If $p-1$ is strictly above $p$ when it is bumped, it will bump into the next column, in the position of the least entry larger than $p-1$. Any entry weakly below and weakly left of $p$ is smaller than $p$, so $p-1$ cannot insert here. It also cannot insert below and strictly right of $p$, since this would require $p-1$ to be above and weakly right of $p$ before being bumped. Hence, if $p-1$ is bumped, $p$ remains in peak position. 

Suppose $p+1$ is bumped. Again, if it is bumped by row insertion, it will obviously remain above $p$. If $p+1$ is bumped by column insertion and it was strictly left of $p$ before being bumped, it will remain above $p$ since no entry weakly below and weakly left of $p$ is larger than $p+1$. If $p+1$ is weakly right of $p$ and strictly above $p$, then it can only be in the cell directly above $p$. In this case, $p+1$ cannot be column bumped since the entry that bumped $p+1$ would be smaller than $p+1$ and therefore smaller than $p$, and $p$ would be bumped before $p+1$. Note that $p$ cannot column bump $p+1$ as $p$ is strictly below $p+1$ at all times, so again, $p$ remains in peak position. We have now shown that entries $p$ such that $p-1$ and $p+1$ are left of $p$ in $\pi$, are sent to peaks in the Sagan--Worley insertion tableau $R(\pi)$. 

Next we must show that no $q\notin P$ may be a peak of $R(\pi)$. There are two cases to consider. Either $q$ is left of $q+1$ in $\pi$, or $q$ is left of $q-1$ and right of $q+1$. 

In the first case, we claim that $q+1$ will always remain weakly below $q$ in the sequence of insertion tableaux, during the insertion process. Certainly $q$ inserts before $q+1$ and so $q+1$ will initially insert weakly below $q$. Also, $q+1$ can never be bumped to a position strictly above $q$ because if $q+1$ is ever bumped by row insertion, then $q$ must be strictly above $q+1$. If $q+1$ is bumped by column insertion, it will move weakly lower and $q$ will remain weakly above $q+1$. If $q$ is weakly above $q+1$, it must be strictly to the left of $q+1$. This means that if $q$ is bumped by column insertion, it will remain weakly above $q+1$, since all entries strictly below and left of $q+1$ are smaller than $q$, and $q$ cannot bump to the right of $q+1$ by column insertion. Hence, $q$ can never find itself strictly below $q+1$. 

For the second case, $q+1$ is inserted before $q$, which is inserted before $q-1$. We consider two subcases:
First, suppose $q+1$ is in the first cell of the bottom row when $q$ is inserted. In this subcase, $q$ will column bump $q+1$, and so after inserting $q$, $q$ is not strictly below $q+1$. If $q+1$ is ever row bumped from the first row, then $q$ must have already been bumped to a higher row, so $q+1$ cannot be row bumped to a higher position than $q$. If $q$ is ever column bumped, it must be strictly left of $q+1$, and can never insert strictly below $q+1$, since all entries weakly left and strictly below $q+1$ are smaller than $q$. Hence, $q$ remains weakly above $q+1$. 

For the second subcase, we assume $q+1$ is not in the first position of the first row when $q$ is inserted. This means that either $q$ will bump $q+1$, or $q+1$ is already higher than the first row when $q$ is inserted. It also means that $q$ does not insert into the first position of the first row, since the entry here must be smaller than $q+1$. When $q-1$ is inserted, either it will bump $q$, or $q$ is already above the first row. Therefore, immediately after inserting $q-1$, $q-1$ is strictly below $q$. 

Suppose $q-1$ is row bumped to the row containing $q$. If $q$ is not in the first position, $q-1$ will bump $q$ to a higher row, and $q$ will not be in peak position. If $q$ is in the first position, then before $q$ inserted to the first position of this row, everything weakly above this position was greater than $q$, which implies that $q$ must have bumped $q+1$ from this position. This means that $q-1$ will column bump $q$ one position to the right, and $q$ will column bump $q+1$, which will move weakly downwards. From here, $q$ is protected from entering peak position, since either $q+1$ is immediately to the right of $q$, or $q+1$ is below $q$. In the first scenario, the entry below $q+1$ is smaller than $q$, so if $q$ is column bumped, it will bump $q+1$ rather than dropping to a lower row. In the second scenario, $q+1$ must be below and right of $q$, so if $q$ is column bumped to a lower row it will not insert below $q+1$. Hence $q$ is never bumped to peak position. Therefore, given a permutation $\pi$, the elements of $\pi$ which become peaks in the (Sagan--Worley) insertion tableau $R(\pi)$ are precisely those elements $p$ such that $p-1$ and $p+1$ are left of $p$ in $\pi$. 
\end{proof}

\begin{corollary}\label{emptythm}
Let $\mathrm{SW}:\pi^{-1} \mapsto (R(\pi^{-1}),S(\pi^{-1}))$. Then $\Peak(\pi) = \Peak(R(\pi^{-1}))$. 
\end{corollary}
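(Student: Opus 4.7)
The plan is to apply Proposition \ref{insertionpeaks} directly to the permutation $\sigma := \pi^{-1}$, and then translate the resulting positional condition on $\sigma$ back into a comparison of entries of $\pi$. By Proposition \ref{insertionpeaks}, an integer $p$ lies in $\Peak(R(\pi^{-1}))$ precisely when, in the one-line word of $\sigma$, the value $p$ appears strictly to the right of both $p-1$ and $p+1$.

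Next I would invoke the standard identity that the position of the value $p$ in $\pi^{-1}$ equals $\pi_p$ (indeed, $(\pi^{-1})_{\pi_p} = p$). Consequently, ``$p$ is to the right of $q$ in $\pi^{-1}$'' is equivalent to the numerical inequality $\pi_p > \pi_q$. Applying this with $q = p-1$ and $q = p+1$ converts the characterization above into $\pi_{p-1} < \pi_p > \pi_{p+1}$, which is exactly the condition for the index $p$ to be a peak of the permutation $\pi$.

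Finally, I would dispatch the boundary values $p = 1$ and $p = n$: both are excluded from $\Peak(\pi)$ and from $\Peak(R(\pi^{-1}))$ by definition (the value $1$ is never a peak of any set by the definition of $\Peak$, and neither a tableau on $[n]$ nor a permutation of size $n$ can have $n$ as a peak, since there is no $n+1$). There is no substantive obstacle here; the insertion-theoretic work has already been carried out in Proposition \ref{insertionpeaks}, and this corollary reduces to the elementary observation that inverting a permutation swaps the roles of ``position'' and ``value''.
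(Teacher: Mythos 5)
Your proposal is correct and follows essentially the same route as the paper's proof: both reduce the statement to Proposition \ref{insertionpeaks} via the observation that the position of the value $p$ in $\pi^{-1}$ is $\pi_p$, so that ``$p$ lies to the right of $p\pm 1$ in $\pi^{-1}$'' is equivalent to $\pi_{p-1}<\pi_p>\pi_{p+1}$. The only cosmetic difference is that you phrase the translation as a single biconditional while the paper argues the two inclusions separately (splitting the non-peak case into ``not a descent'' versus ``consecutive descents''), and your explicit handling of the boundary indices $p=1,n$ is a harmless addition left implicit in the paper.
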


\begin{proof}
Let $p\in \Peak(\pi)$. Then we have $\pi_{p-1} < \pi_p > \pi_{p+1}$, which gives that $p$ is right of both $p-1$ and $p+1$ in $\pi^{-1}$. Then, by Proposition \ref{insertionpeaks}, we have $p\in \Peak(R(\pi^{-1}))$. 
Next, suppose $q\notin \Peak(\pi)$. Either $\pi_q < \pi_{q+1}$, i.e. $q$ is not a descent, or $\{q-1,q\}\subseteq \Des(\pi)$, so $\pi_{q-1} > \pi_q > \pi_{q+1}$. In the first case $q+1$ is right of $q$ in $\pi^{-1}$, and in the second case, $q-1$ is to the right of $q$, which is to the right of $q+1$ in $\pi^{-1}$. In both situations, again by Proposition \ref{insertionpeaks}, we see $q\notin \Peak(R(\pi^{-1}))$.
\end{proof}

\begin{theorem}
\[R_n(\emptyset) = \sum_{\lambda} 2^{n-\ell(\lambda)}|\SShT(\lambda)|Q_\lambda.\]
\end{theorem}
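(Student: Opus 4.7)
The plan is to use Sagan--Worley insertion together with Corollary \ref{emptythm} to convert the sum over permutations into a sum over pairs of shifted tableaux, and then recognize the result as a combination of Schur $Q$-functions.

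First I would rewrite $R_n(\emptyset) = \sum_{\pi \in \mathfrak{S}_n} K_{\Peak(\pi)}$ using the fact that $\pi \mapsto \pi^{-1}$ is a bijection on $\mathfrak{S}_n$, so
\[
R_n(\emptyset) = \sum_{\pi \in \mathfrak{S}_n} K_{\Peak(\pi^{-1})}.
\]
Then I would apply Corollary \ref{emptythm} to replace $\Peak(\pi^{-1})$ with $\Peak(R(\pi))$, where $\mathrm{SW}:\pi \mapsto (R(\pi),S(\pi))$. This gives
\[
R_n(\emptyset) = \sum_{\pi \in \mathfrak{S}_n} K_{\Peak(R(\pi))}.
\]

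Next I would use the fact that Sagan--Worley insertion is a bijection between $\mathfrak{S}_n$ and pairs $(R,S)$ where $R$ is a standard shifted tableau and $S$ is a marked standard shifted tableau of the same shape $\lambda \vdash n$. Regrouping the sum by the shape $\lambda$ of $R(\pi)$, I get
\[
R_n(\emptyset) = \sum_{\lambda \vdash n} \;\sum_{R \in \SShT(\lambda)}\; \bigl(\#\{\text{marked SShT of shape } \lambda\}\bigr)\, K_{\Peak(R)}.
\]

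The number of marked standard shifted tableaux of shape $\lambda$ is $2^{n-\ell(\lambda)}|\SShT(\lambda)|$, since the underlying standard shifted tableau can be chosen in $|\SShT(\lambda)|$ ways and each of the $n - \ell(\lambda)$ entries off the main diagonal may be independently marked or unmarked. Substituting this in and applying equation \eqref{SchurQdefn}, which states $Q_\lambda = \sum_{T \in \SShT(\lambda)} K_{\Peak(T)}$, yields the desired identity
\[
R_n(\emptyset) = \sum_{\lambda \vdash n} 2^{n-\ell(\lambda)}|\SShT(\lambda)|\, Q_\lambda.
\]
No step here is a serious obstacle, since the heavy lifting was done in Proposition \ref{insertionpeaks} and Corollary \ref{emptythm}; the only thing that must be verified carefully is the count of marked standard shifted tableaux, which is a routine observation.
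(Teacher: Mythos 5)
Your proposal is correct and follows essentially the same route as the paper: both arguments reduce to Corollary \ref{emptythm}, the bijectivity of Sagan--Worley insertion onto pairs (standard shifted tableau, marked standard shifted tableau) of equal shape, the count $2^{n-\ell(\lambda)}$ of markings of the off-diagonal cells, and the expansion \eqref{SchurQdefn}. The only difference is bookkeeping --- the paper groups permutations by the unmarked recording tableau $T$ before summing over shapes, while you reindex the whole sum by shape at once --- which does not change the substance of the argument.
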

\begin{proof}

For a marked standard shifted tableau $S$, we let $\mathrm{unmark}(S)$ denote the tableau obtained by removing all of the markings on entries of $S$. For some standard shifted tableau $T$ with $n$ cells, let $A(T)$ denote the set of permutations whose inverse maps to any recording tableau that `unmarks' to $T$ under Sagan--Worley insertion, i.e. \[A(T) = \{\pi: \mathrm{unmark}(S(\pi^{-1})) = T\}.\]
Let $\lambda\vdash n$ be the shape of $T$. Only the off-diagonal entries of a marked standard shifted tableau may be marked (recall marked entries are those bumped \textit{from} the main diagonal, and in such an instance, the new cell created is always strictly right of, and weakly below, the cell where column insertion began). As such, there are $2^{n-\ell(\lambda)}$ marked $\SShT$ of shape $\lambda$ that unmark to $T$. Given a marked $\SShT$ of shape $\lambda$, there are $|\SShT(\lambda)|$ permutations for which that marked $\SShT$ is the recording tableau. Therefore, $A(T)$ contains $2^{n-\ell(\lambda)}|\SShT(\lambda)|$ permutations. The peak sets of these permutations are in bijection with peak sets of (Sagan--Worley) insertion tableaux that pair with a recording tableau that unmarks to $T$. If we sum the peak functions associated with these permutations, we obtain the following:
\begin{equation}\label{A(T)}\sum_{\pi \in A(T)} K_{\Peak(\pi)} = \sum_{P \in \SShT(\lambda)}2^{n-\ell(\lambda)} K_{\Peak(P)} = 2^{n-\ell(\lambda)} Q_\lambda.\end{equation}

Let $A(\lambda)$ denote the set of permutations whose inverse maps under Sagan--Worley insertion to any pair of tableaux of shape $\lambda$, i.e.  \[A(\lambda) = \bigcup_{T\in \SShT(\lambda)}A(T).\] 
If we sum the peak functions associated with each permutation in $A(\lambda)$, we get the following:

\begin{align*}\sum_{\pi\in A(\lambda)}K_{\Peak(\pi)} &= \sum_{T\in \SShT(\lambda)}\sum_{\pi\in A(T)} K_{\Peak(\pi)}\\[6 pt] &= \sum_{T\in \SShT(\lambda)} 2^{n-\ell(\lambda)}Q_\lambda\\[6 pt] &= 2^{n-\ell(\lambda)}|\SShT(\lambda)| Q_\lambda,  \end{align*}

where the second equality is from \eqref{A(T)}. Summing over all $\lambda\vdash n$, we obtain

\[R_n(\emptyset) =  \sum_{\lambda \vdash n}\sum_{\pi\in A(\lambda)}K_{\Peak(\pi)} = \sum_{\lambda \vdash n} 2^{n-\ell(\lambda)}|\SShT(\lambda)|Q_\lambda.\]

\vspace{-0.5cm}
\end{proof}

\subsection{The single pattern cases.}

In Table \ref{mainthm}, most of the formulae given are true for several different Wilf-equivalent sets of patterns. We introduce a stronger notion of equivalence on sets of permutations. 

\begin{definition}
Given two sets of permutations, $\Pi$ and $\Pi'$, we say that these sets are \textit{peak-equivalent} if for all $n$, \[\big\{\!\!\big\{\Peak(\pi):\pi\in \mathrm{Av}_n(\Pi)\big\}\!\!\big\} = \big\{\!\!\big\{\Peak(\pi):\pi\in \mathrm{Av}_n(\Pi')\big\}\!\!\big\},\] where the double brackets denotes a multiset. Clearly $\Pi$ and $\Pi'$ are peak-equivalent if and only if $R_n(\Pi) = R_n(\Pi')$ for all $n$. 
\end{definition}

We will now show that if a pattern-avoiding peak function $R_n(\Pi)$ is symmetric, then $\Pi$ and $\Pi^r$ are peak-equivalent, which reduces the number of cases we must check for many rows in Table \ref{mainthm}.

\begin{lemma}\label{reverselemma}
Given some collection of patterns $\Pi$, if the pattern-avoiding peak function $R_n(\Pi)$ is symmetric, then $\Pi$ and $\Pi^r$ are peak-equivalent. In particular, if $R_n(\Pi)$ is symmetric, then \[ R_n(\Pi) = R_n(\Pi^r) .\]
\end{lemma}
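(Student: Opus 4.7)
The plan is to introduce an involution $\rho$ on the degree-$n$ component of $\QSym$ that acts as the identity on $\Sym$ but sends $R_n(\Pi)$ to $R_n(\Pi^r)$. The lemma then follows immediately: if $R_n(\Pi)$ is symmetric it is fixed by $\rho$, and thus equals $R_n(\Pi^r)$.

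Define $\rho$ to be the linear involution with $\rho(F_\alpha) = F_{\alpha^r}$, where $\alpha^r = (\alpha_k,\ldots,\alpha_1)$ is the reverse of $\alpha = (\alpha_1,\ldots,\alpha_k)$. Since the $F_\alpha$ form a basis this is well-defined, and $\rho^2 = \mathrm{id}$ is immediate. I would first verify $\rho|_\Sym = \mathrm{id}$: combining $F_\alpha = \sum_{\beta \leq \alpha} M_\beta$ with the observation that refinement commutes with reversal of compositions, one gets $\rho(M_\alpha) = M_{\alpha^r}$; hence the monomial symmetric function $m_\lambda$, a sum of $M_\alpha$ over rearrangements of $\lambda$ (a set closed under reversal), is fixed by $\rho$, and so is every element of $\Sym$.

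The key computation is then to show $\rho(K_{S,n}) = K_{\rev_n(S), n}$, where $\rev_n(S) = \{n+1-s : s \in S\}$. Applying $\rho$ termwise to the definition of $K_{S,n}$ and reindexing by $\gamma = \beta^r$, this reduces to the following set-theoretic identity, obtained from $\set(\beta^r) = \{n-s : s \in \set(\beta)\}$:
\[
\set(\beta) \,\triangle\, (\set(\beta)+1) \;=\; \rev_n\bigl(\set(\gamma) \,\triangle\, (\set(\gamma)+1)\bigr),
\]
so that the indexing condition $S \subseteq \set(\beta) \triangle (\set(\beta)+1)$ converts to $\rev_n(S) \subseteq \set(\gamma) \triangle (\set(\gamma)+1)$, matching the defining sum for $K_{\rev_n(S), n}$.

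To finish, a direct check from $\pi^r_i = \pi_{n+1-i}$ shows $\Peak(\pi^r) = \rev_n(\Peak(\pi))$ for every $\pi \in \mathfrak{S}_n$. Combining this with the bijection $\pi \leftrightarrow \pi^r$ between $\mathrm{Av}_n(\Pi)$ and $\mathrm{Av}_n(\Pi^r)$,
\[
\rho(R_n(\Pi)) \;=\; \sum_{\pi \in \mathrm{Av}_n(\Pi)} K_{\rev_n(\Peak(\pi))} \;=\; \sum_{\pi \in \mathrm{Av}_n(\Pi)} K_{\Peak(\pi^r)} \;=\; R_n(\Pi^r),
\]
and applying the symmetry hypothesis at each $n$ yields peak-equivalence. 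I expect the main obstacle to be the set-theoretic identity in the third step: carefully tracking how reversal of a composition interacts with the symmetric-difference condition appearing in the definition of $K_{S,n}$. The other steps are essentially immediate from the definitions.
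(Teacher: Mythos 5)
Your proposal is correct and follows essentially the same route as the paper: the same set-theoretic identity relating $\set(\beta^r)$ to $\set(\beta)$ under the symmetric-difference condition, the same observation that $\Peak(\pi^r)=\{n+1-p:p\in\Peak(\pi)\}$, and the same appeal to the fact that a symmetric function's monomial expansion is closed under reversal of the indexing compositions. Packaging these steps as an involution $\rho$ fixing $\Sym$ is only a cosmetic reorganization of the paper's direct expansion argument.
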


\begin{proof}
We will demonstrate that, upon expanding $R_n(\Pi)$ into monomial quasisymmetric functions, reversing the permutations in the set of patterns $\Pi$ is equivalent to reversing the compositions which index the monomial quasisymmetric functions, i.e. \[R_n(\Pi) = \sum_\alpha c_\alpha M_\alpha \implies R_n(\Pi^r) = \sum_\alpha c_\alpha M_{\alpha^r}.\] Fix $n$, choose some permutation $\pi\in \mathfrak{S}_n$, and observe that \[\Peak(\pi^r) = \{n+1-p_i~|~p_i\in \Peak(\pi)\}.\]
If $F_\beta$ appears in the expansion of $K_{\Peak(\pi)}$ into fundamental quasisymmetric functions for some $\beta = (\beta_1,\ldots,\beta_j)$, then for all $p_i\in \Peak(\pi)$, we have $p_i\in \set(\beta)\triangle \big(\set(\beta)+1\big).$ We wish to show that \[(n+1-p_i) \in \set(\beta^r)\triangle \big(\set(\beta^r)+1\big).\]

Note that we can express the set of the reverse of the composition $\beta$ as \[\set(\beta^r) = \left\{n-\sum_{i=1}^{j-1}\beta_i,n-\sum_{i=1}^{j-2}\beta_i,\ldots, n-\beta_1\right\}. \]

If some element $p$ is in $ \set(\beta)\triangle \big(\set(\beta)+1\big)$ then there are two cases to consider. 
If $p\in \set(\beta)$ and $p\notin \big(\set(\beta)+1\big)$, then $p = \sum_{i=1}^g \beta_i$ for some $g$, and $p \neq \big(\sum_{i=1}^{h-1} \beta_i\big) + 1$ for any $h$. 

This gives \[n-p+1 = n-\sum_{i=1}^g \beta_i +1 ~~\text{  and  } ~~n-p+1 \neq  n-\sum_{i=1}^h \beta_i\] so we see $n+1-p \in  \big(\set(\beta^r)+1\big)$ and $n+1-p\notin \set(\beta^r)$. In other words, $p\in \set(\beta^r)\triangle(\set(\beta^r)+1)$. The argument when $p\notin \set(\beta)$ and $p\in \set(\beta)+1$ is similar so we will omit it here. Additionally, if there is some element $q\in [n-1]$ such that $q\notin \set(\beta)\triangle(\set(\beta^r)+1)$, then either $q$ cannot be written as $\sum_{i=1}^g\beta_i$ or $\sum_{i=1}^{h-1}\beta_i+1$ for any $g,h$, or there is some $g$ such that $q = \sum_{i=1}^g\beta_i = \sum_{i=1}^{g-1}\beta_i +1$. In either case it is straightforward to check that $n+1-q \notin \set(\beta^r)\triangle(\set(\beta^r)+1)$, and so $\Peak(\pi) \subseteq \set(\beta)\triangle (\set(\beta)+1) $ implies  $\Peak(\pi^r) \subseteq \set(\beta^r)\triangle (\set(\beta^r)+1) $. Hence, since 
\begin{equation}\label{peakfb1}K_{\Peak(\pi)} = 2^{|\Peak(\pi)|}\sum_{\Peak(\pi)\subseteq \set(\beta)\triangle (\set(\beta)+1)} F_\beta, \end{equation}  upon reversing the permutation $\pi$, we have \begin{equation}\label{peakfb2}
    K_{\Peak(\pi^r)} = 2^{|\Peak(\pi)|}\sum_{\Peak(\pi)\subseteq \set(\beta^r)\triangle (\set(\beta^r)+1)} F_{\beta^r}.\end{equation}
Note that the coefficients in \eqref{peakfb1} and \eqref{peakfb2} only depend on the size of the indexing peak set, and $|\Peak(\pi)| = |\Peak(\pi^r)|$ so the coefficients are the same. 

To further expand into monomial quasisymmetric functions, recall that $F_\beta = \sum_\alpha M_\alpha$ where the sum is taken over all $\alpha$ that refine $\beta$. Clearly $\alpha$ refines $\beta$ if and only if $\alpha^r$ refines $\beta^r$, so we have \[K_{\Peak(\pi)} = \sum_\alpha c_\alpha M_\alpha \implies K_{\Peak(\pi^r)} = \sum_\alpha c_\alpha M_{\alpha^r}.\]  
In other words, we have shown that reversing the permutation indexing a peak function is equivalent to reversing the compositions in the monomial quasisymmetric function expansion of the peak function. 
Suppose, for some set of permutations $\Pi$, that $R_n(\Pi)$ is symmetric. In this case, we can expand $R_n(\Pi)$ in terms of the \textit{monomial symmetric function} basis of $\Sym$. Since monomial symmetric functions expand into monomial quasisymmetric functions by summing over every rearrangement of the indexing partition, if $R_n(\Pi)$ is symmetric then for every occurrence of $M_\beta$ in $R_n(\Pi)$, $M_{\beta^r}$ also occurs. 

From this, it follows that if $R_n(\Pi)$ is symmetric, then it is equal to $R_n(\Pi^r).$
\end{proof}

\begin{remark}
Let $s_\gamma$ denote the Schur function indexed by a partition $\gamma$. There is an analogous result to Lemma \ref{reverselemma} for the pattern quasisymmetric function $\mathcal{Q}_n(\Pi)$ in \cite{HPS20}. In this case, there is not equality but rather, if $\mathcal{Q}_n(\Pi) = \sum c_\gamma s_\gamma$ then $\mathcal{Q}_n(\Pi^r) = \sum c_\gamma s_{\gamma^t}$, where $\gamma^t$ denotes the partition whose diagram is the reflection of the diagram of shape $\gamma$ about the main diagonal. The \textit{complement} $\pi^{c}$ of a permutation $\pi$ is obtained by replacing each element $x$ of the permutation with $n+1-x$, and the \textit{reverse complement} $\pi^{rc}$ of a permutation $\pi$ is the complement of $\pi^r$. In \cite{HPS20}, it is also shown that if $\mathcal{Q}_n(\Pi) = \sum c_\gamma s_\gamma$ then $\mathcal{Q}_n(\Pi^c) = \sum c_\gamma s_{\gamma^t}$, and so $\mathcal{Q}_n(\Pi^{rc}) = \sum c_\gamma s_\gamma$, where $\Pi^r = \{\pi^r:\pi\in\Pi\}$ and $\Pi^{rc} = \{\pi^{rc}:\pi\in \Pi\}$. It is easy to verify that these properties do not hold for pattern-avoiding peak functions. For example, we have $213^c = 231$ and $213^{rc} = 132$.  As we shall see shortly, $R_n(213)$ is Schur $Q$-positive for all $n$, but one can check that $R_n(132)$ and $R_n(231)$ are not even symmetric if $n=5$.

\end{remark}
Lemma \ref{reverselemma} tells us that having $R_n(\Pi) = R_n(\Pi^r)$ is a necessary condition for $R_n(\Pi)$ to be symmetric. The conclusion of Lemma \ref{reverselemma} does not necessarily hold for avoidance classes whose pattern-avoiding peak function is not symmetric. The simplest case to see this is when $\Pi = \{123,312\}$. Two permutations in $\mathrm{Av}_4(123,312)$ have peak set $\{3\}$, namely 3241 and 2143. On the other hand, in $\mathrm{Av}_4(213,321)$, the permutations 1342, 1243, and 2341 all occur, and all have peak set $\{3\}.$ Therefore $\Pi$ and $\Pi^r$ are not peak-equivalent, and $R_4(\Pi) \neq R_4(\Pi^r)$. 

One can also show that having $R_n(\Pi) = R_n(\Pi^r)$ is not a sufficient condition for $R_n(\Pi)$ to be symmetric. It is not difficult to demonstrate that the multisets $\{\!\{\Des(\pi): \pi\in \mathrm{Av}_n(132)\}\!\}$ and  $\{\!\{\Des(\pi): \pi\in \mathrm{Av}_n(231)\}\!\}$ are the same, which implies that 132 and 231 are peak-equivalent and $R_n(132) = R_n(231)$, but as noted earlier, $R_5(132)$ is not symmetric.

Defined analogously to peaks, the \textit{valleys} of a permutation $\pi$ are those indices $i$ such that $\pi_{i-1}> \pi_i < \pi_{i+1}.$ Claesson and Kitaev's survey \cite{claesson2008classification} describes a bijection between $\mathrm{Av}_n(321)$ and $\mathrm{Av}_n(132)$ based in part on work done by Rotem in \cite{rotem1975correspondence}, and another result due to Knuth \cite{knuth1969art}. In \cite{claesson2008classification}, they note that this bijection sends the peaks of a permutation in $\mathrm{Av}_n(321)$ to valleys in the image. It is simple to check that taking the reverse complement of a permutation $\pi$ sends peaks to valleys (and vice versa), and that $\{\pi^{rc}:\pi\in \mathrm{Av}_n(\Pi)\} = \mathrm{Av}_n(\Pi^{rc})$. By taking the reverse complement of permutations in $\mathrm{Av}_n(132)$, and composing this map with the bijection from \cite{claesson2008classification}, we obtain a peak set preserving bijection between $\mathrm{Av}_n(321)$ and $\mathrm{Av}_n(213)$. This, along with Lemma \ref{reverselemma}, shows that all four cases of the second row of Table \ref{mainthm} are equivalent. In particular, if we are able to show that e.g. $R_n(321)$ is symmetric, then we have $R_n(123) = R_n(213) = R_n(312) = R_n(321).$ This is the object of the following theorem. 

\begin{theorem}\label{321case}
For each $n \geq 1$,
\[
R_n(321) = \sum_{k=0}^{\lfloor n/2 \rfloor} 
\left( {n \choose k+1} - {n \choose k-1} \right) Q_{(n-k,k)} .
\]
\end{theorem}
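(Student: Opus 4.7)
The plan is to expand $R_n(321)$ using RSK insertion and then reduce the theorem to a peak-preserving bijection between two-row $\SYT$ and two-row $\SShT$. Although the preceding subsection uses Sagan--Worley, the classical RSK identities $\Des(\pi)=\Des(Q(\pi))$ (whence $\Peak(\pi)=\Peak(Q(\pi))$) and Schensted's theorem that $\pi$ avoids $321$ precisely when its insertion tableau has at most two rows make RSK the natural tool here.

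Grouping $\pi \in \mathrm{Av}_n(321)$ by its RSK recording tableau $Q(\pi)$ yields
\[
R_n(321) \;=\; \sum_{k=0}^{\lfloor n/2\rfloor} f^{(n-k,k)}\sum_{Q\in\SYT(n-k,k)} K_{\Peak(Q)},
\]
where $f^{(n-k,k)}=\binom{n}{k}-\binom{n}{k-1}$. The core step is the intermediate identity
\[
\sum_{Q\in\SYT(n-k,k)} K_{\Peak(Q)} \;=\; Q_{(n-k,k)} \;+\; Q_{(n-k+1,\,k-1)},
\]
with the convention $Q_\mu=0$ unless $\mu$ is strict. I would prove this via a peak-preserving bijection $\SYT(n-k,k) \leftrightarrow \SShT(n-k,k) \sqcup \SShT(n-k+1,k-1)$, encoding each two-row tableau by its \emph{ballot sequence} $a_1\cdots a_n\in\{1,2\}^n$ where $a_j$ is the row containing $j$. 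Then $\SYT(n-k,k)$ corresponds to weak ballots ($\#1\ge\#2$ on every prefix) and two-row strict $\SShT$ to strict ballots ($\#1>\#2$). Strict weak ballots match $\SShT(n-k,k)$ tautologically; for a non-strict weak ballot $a$, if $p$ is the first index with $\#1=\#2$ one checks that $a_p=2$ and $a_{p+1}=1$, so flipping $a_p$ from $2$ to $1$ produces a strict ballot with content $(n-k+1,k-1)$, reversibly.

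Peak-preservation rests on two observations: two-row $\SYT$ satisfy $\Peak(Q)=\Des(Q)\setminus\{1\}$ (since a two-row $\SYT$ can never have two consecutive descents), while two-row strict $\SShT$ satisfy $\Peak(T)=\Des(T)$ (the strict ballot condition forces entries $1$ and $2$ into row $1$, so $1\notin\Des(T)$). A short case split on $p$ finishes the argument: when $p=2$ the flip removes $1$ from $\Des$, matching the $\setminus\{1\}$ on the $\SYT$ side; when $p\ge 3$ the descent set is unchanged and $1\notin\Des$ on both sides. Substituting back and reindexing $Q_{(n-k+1,k-1)}$ via $k\mapsto k-1$ expresses the coefficient of $Q_{(n-k,k)}$ as
\[
f^{(n-k,k)}+f^{(n-k-1,k+1)} \;=\; \left(\binom{n}{k}-\binom{n}{k-1}\right) + \left(\binom{n}{k+1}-\binom{n}{k}\right) \;=\; \binom{n}{k+1}-\binom{n}{k-1},
\]
as required. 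The main obstacle I anticipate is the careful verification of peak-preservation in the ballot-flip bijection, in particular tracking how $\Des$ changes at positions $p-1$ and $p$ in each of the $p=2$ and $p\ge 3$ cases.
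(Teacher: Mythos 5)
Your proof is correct, but it takes a genuinely different route from the paper's. The paper composes RSK with Sagan--Worley insertion of the reading word of the recording tableau, defining $\Phi(\pi)=R(\mathrm{rw}(Q(\pi)))$; it shows $\Phi$ preserves peaks (via a delicate case analysis of how Sagan--Worley bumping moves entries $p-1,p,p+1$), and then, for each fixed $T\in\SShT(n-k,k)$, identifies $\Phi^{-1}(T)$ as exactly the $321$-avoiders whose recording tableau is one of two explicitly constructed $\SYT$, $S$ of shape $(n-k,k)$ and $S'$ of shape $(n-k-1,k+1)$, yielding $|\Phi^{-1}(T)|=f^{(n-k,k)}+f^{(n-k-1,k+1)}=\binom{n}{k+1}-\binom{n}{k-1}$. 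You instead group by the RSK recording tableau directly and prove the transposed statement: each $\SYT$ shape distributes its peak functions over two shifted shapes, via the identity $\sum_{Q\in\SYT(n-k,k)}K_{\Peak(Q)}=Q_{(n-k,k)}+Q_{(n-k+1,k-1)}$, established by the ballot-flip bijection. The coefficient arithmetic at the end is identical in both arguments. What your route buys is that it bypasses Sagan--Worley insertion entirely (and hence Proposition \ref{insertionpeaks}, the most technical part of the paper's argument for this case): the two-row setting lets you compute $\Peak$ explicitly from the ballot word ($\Peak(Q)=\Des(Q)\setminus\{1\}$ for two-row $\SYT$, $\Peak(T)=\Des(T)$ for two-row strict $\SShT$, since neither admits consecutive descents), so peak-preservation reduces to tracking $\Des$ at positions $p-1$ and $p$, which you do correctly in both the $p=2$ and $p\geq 3$ cases. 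What the paper's route buys is machinery (the insertion-peak analysis) that is reused elsewhere (the $R_n(\emptyset)$ case) and a construction of $S'$ that does not rely on the shapes having only two rows, which is more suggestive for generalizations such as Conjecture \ref{monotoneschurqpos}. The only detail you leave implicit is the invertibility of the ballot flip; this is standard (injectivity follows because the flipped position is recoverable as the first index where the prefix difference of the image equals $2$ with all later prefix differences at least $2$, and surjectivity then follows from the count $|\SYT(n-k,k)|-|\SShT(n-k,k)|=\binom{n-1}{k-1}-\binom{n-1}{k-2}=|\SShT(n-k+1,k-1)|$), but it deserves a sentence.
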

We delay the proof of Theorem \ref{321case}, as it requires some auxiliary results. 
\begin{definition}
Given an $\SYT$, $S$, let the \textit{standard reading word} of $S$, $\mathrm{rw}(S)$, be the word formed by reading entries of rows of $S$ from left to right, starting at the top row and working down row by row to the bottom. We define a map $\Phi$, that sends a permutation $\pi\in\mathfrak{S}_n$ to a standard shifted tableau with $n$ cells, by \[\Phi(\pi) = R(\mathrm{rw}(Q(\pi))).\] 

In words, we map a permutation to a standard shifted tableau by first performing $\mathrm{RSK}$ insertion, then creating a standard reading word from the recording tableau, and sending this to an $\SShT$ via Sagan--Worley insertion. At the final step, we consider only the insertion tableau. 
\end{definition}

Although $\Phi$ is defined for any permutation, since we will use this map to establish a peak-preserving correspondence between $\mathrm{Av}(321)$ and standard shifted tableaux of height at most two, we restrict the domain of $\Phi$ to $\mathrm{Av}(321)$ throughout all that follows.

\begin{proposition}\label{peakpreservephi}
For any permutation $\pi$, 
\[\Peak(\Phi(\pi)) = \Peak(\pi).\]
\end{proposition}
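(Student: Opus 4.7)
The plan is to combine Proposition \ref{insertionpeaks}, applied to the word $\mathrm{rw}(Q(\pi))$, with the classical RSK descent theorem, which asserts that $i\in\Des(\pi)$ if and only if $i+1$ lies strictly above $i$ in the recording tableau $Q(\pi)$. Since $p\in\Peak(\pi)$ is equivalent to the two conditions $p\in\Des(\pi)$ and $p-1\notin\Des(\pi)$, the descent theorem translates this into the following configuration inside $Q(\pi)$: the cell containing $p+1$ is strictly above the cell containing $p$, and the cell containing $p$ is weakly below the cell containing $p-1$.

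The next step is to translate these positional statements inside $Q(\pi)$ into statements about the order of $p-1$, $p$, and $p+1$ along the reading word $\mathrm{rw}(Q(\pi))$. Because $\mathrm{rw}$ reads rows from top to bottom, and each row is strictly increasing from left to right, an entry in a strictly higher row precedes one in a lower row, while two entries sharing a row are ordered by value. It follows that the configuration ``$p+1$ strictly above $p$'' places $p+1$ before $p$ in $\mathrm{rw}(Q(\pi))$; similarly, ``$p-1$ weakly above $p$'' places $p-1$ before $p$ (with the same-row case handled by the fact that $p-1<p$ forces $p-1$ to lie to the left of $p$). Thus both $p-1$ and $p+1$ occur to the left of $p$ in $\mathrm{rw}(Q(\pi))$, and Proposition \ref{insertionpeaks} then yields $p\in\Peak(R(\mathrm{rw}(Q(\pi))))=\Peak(\Phi(\pi))$.

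For the reverse inclusion I would argue contrapositively. If $p\notin\Peak(\pi)$, then either $p\notin\Des(\pi)$, in which case $p+1$ is weakly below $p$ in $Q(\pi)$ and so (by the same reading-word analysis) $p$ precedes $p+1$ in $\mathrm{rw}(Q(\pi))$; or $p-1\in\Des(\pi)$, in which case $p$ is strictly above $p-1$ in $Q(\pi)$ and so $p$ precedes $p-1$ in the reading word. In either case $p$ fails the ``right of both $p-1$ and $p+1$'' criterion of Proposition \ref{insertionpeaks}, giving $p\notin\Peak(\Phi(\pi))$.

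The only real subtlety is the interplay between the two notions of ``above'' and the linear order of the reading word; specifically, one must be careful to unfold the same-row case using the strict left-to-right row increase in $Q(\pi)$, and to observe that the RSK descent theorem is phrased in terms of the recording tableau (so $\pi$ itself never enters the argument after the first sentence). This is routine bookkeeping rather than a genuine obstacle, and no properties of $\mathrm{Av}(321)$ are needed for this proposition.
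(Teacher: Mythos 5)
Your argument is correct and follows essentially the same route as the paper: both use the RSK descent theorem to reduce to the positions of $p-1,p,p+1$ in $Q(\pi)$, translate those positions into the order of these entries along $\mathrm{rw}(Q(\pi))$ (handling the same-row case via row-increase), and then invoke Proposition \ref{insertionpeaks} for both inclusions. The only cosmetic difference is that you phrase the second inclusion as a contrapositive, which is exactly what the paper's ``suppose $q\notin\Peak(Q(\pi))$'' case analysis does.
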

\begin{proof}
We know $\Peak(\pi) = \Peak(Q(\pi))$ since $\Des(\pi) = \Des(Q(\pi))$, a well known property of $\mathrm{RSK}$ insertion. It remains to show that $\Peak(Q(\pi)) = \Peak(\Phi(\pi))$. Consider an element $p\in \Peak(Q(\pi))$. Since $p$ is a peak in the $\SYT$ $Q(\pi)$, it is strictly below $p+1$ and weakly below $p-1$. This means that $p-1$ and $p+1$ come before $p$ in $\mathrm{rw}(Q(\pi))$, and therefore, $p\in \Peak(\Phi(\pi))$ by Proposition \ref{insertionpeaks}.  

Next, suppose $q\notin \Peak(Q(\pi))$. This means that either $q$ is weakly above $q+1$, or strictly above $q-1$ and strictly below $q+1$ in $Q(\pi)$. In the first case, $q+1$ is either right of $q$ in the same row of $Q(\pi)$, or strictly below $q$, but either way, $q+1$ is right of $q$ in $\mathrm{rw}(Q(\pi))$. In the second case, $q-1$ is right of $q$ which is right of $q+1$ in $\mathrm{rw}(Q(\pi))$, and for both situations, $q\notin \Peak(\Phi(\pi))$ by Proposition \ref{insertionpeaks}. 
\end{proof}

To prove Theorem \ref{321case}, we will fix an $\SShT$ $T$ and construct the preimage $\Phi^{-1}(T)$. We will show that the preimage of $T$ consists of all permutations whose recording tableau under $\mathrm{RSK}$ insertion is one of (up to) two distinct $\SYT$, one of which, $S$, is obtained in a very natural manner from $T$, and the other, $S'$, is found by \textit{shifting} one entry from the bottom row of $S$ to the top row. We use indices to refer to specific entries within tableaux, i.e. $T_{i,j}$ is the entry in the cell of a tableau $T$ that lies in the $i^{th}$ row from the bottom, and $j^{th}$ column from the left. We now describe the tableaux $S$ and $S'$, and the shifting process. 

\begin{definition}
Fix some standard shifted tableau $T\in \SShT(n-k,k)$, where $n\geqslant 2k+1$. Let $S$ denote the tableau obtained by moving the entire second row of $T$ one cell to the left. If $n > 2k+1$, then let $S'$ be the tableau created in the following manner. 
Let $s$ denote the largest entry of $S$ in row 1 such that the entry in row 2, two columns to the left, is smaller than $s$. That is, $s = S_{1,j+2}$ is the largest entry such that $S_{1,j+2} > S_{2,j}$, for some $j$. If no such entry exists, let $s=2$. Then $S'$ is the tableau obtained by removing $s$ from row 1 of $S$ and placing $s$ into row 2 as follows. When removing $s$ from row 1, we move every entry from row 1 that is larger than $s$ to the left by one, and delete the empty cell left at the rightmost end of row 1. When placing $s$ into row 2, we create a cell in the rightmost position of row 2, shift every entry larger than $s$ one cell to the right, and place $s$ into the empty cell.

\end{definition}

Note that $S$ is clearly an $\SYT$. If $n>2k+1$ and we can create $S'$, then $S'$ is also an $\SYT$, which we will prove.

\begin{lemma}
For $T\in \SShT(n-k,k)$, if $n>2k+1$, $S'$ is an $\SYT$. 
\end{lemma}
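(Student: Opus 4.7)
The plan is to pin down the column in row $2$ of $S'$ into which $s$ is inserted, and then to verify the row and column strictness conditions for $S'$ directly, using the maximality property defining $s$. Throughout, let $j_0$ be the index with $s=S_{1,j_0+2}$ (in the case where such a maximum exists; the degenerate $s=2$ case is treated at the end), and let $j^*$ denote the largest index with $S_{2,j^*}<s$, so that $s$ is inserted at column $j^*+1$ of row $2$ in $S'$. Since $s>S_{2,j_0}$ by definition, $j^*\geqslant j_0$.

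The first and most delicate step is to prove that $j^*=j_0$. I would argue by contradiction: suppose $j^*\geqslant j_0+1$, so $S_{2,j_0+1}<s$. If the cell $(1,j_0+3)$ lies in $S$, then row strictness gives $S_{1,j_0+3}>s>S_{2,j_0+1}$, meaning that the index $j_0+1$ satisfies the defining inequality with the strictly larger entry $S_{1,j_0+3}$, contradicting the maximality of $s$. If instead $(1,j_0+3)$ does not lie in $S$, then $j_0=n-k-2$, and the assumption $j^*\geqslant j_0+1=n-k-1$ combined with $j^*\leqslant k$ forces $n\leqslant 2k+1$, contradicting the hypothesis $n>2k+1$. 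Hence $j^*=j_0$.

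With $j^*=j_0$ established, the entries of $S'$ are: row $1$ has $S'_{1,i}=S_{1,i}$ for $i\leqslant j_0+1$ and $S'_{1,i}=S_{1,i+1}$ for $j_0+2\leqslant i\leqslant n-k-1$; row $2$ has $S'_{2,i}=S_{2,i}$ for $i\leqslant j_0$, $S'_{2,j_0+1}=s$, and $S'_{2,i}=S_{2,i-1}$ for $j_0+2\leqslant i\leqslant k+1$. Row strictness is immediate from the shifts. For column strictness $S'_{2,i}>S'_{1,i}$, I would split into three ranges. For $i\leqslant j_0$ the inequality is inherited from $S$. At $i=j_0+1$, $S'_{2,j_0+1}=s=S_{1,j_0+2}>S_{1,j_0+1}=S'_{1,j_0+1}$ by row strictness of $S$. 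For $i\geqslant j_0+2$, both entries are shifted, and the maximality of $s$ applied at index $i-1$ shows $S_{1,i+1}\leqslant S_{2,i-1}$ (otherwise the pair $(i-1,S_{1,i+1})$ would beat $(j_0,s)$); distinctness of entries then upgrades this to $S_{1,i+1}<S_{2,i-1}$, which is exactly $S'_{1,i}<S'_{2,i}$.

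Finally, in the degenerate case $s=2$, the failure of the defining inequality for every valid $j$ forces $2$ to sit at $(1,2)$ in $S$: if instead $2=S_{2,1}$, then $S_{1,3}\geqslant 4>2=S_{2,1}$ would furnish a candidate at $j=1$ (valid since $n\geqslant 2k+2$ and $k\geqslant 1$ give $n-k\geqslant 3$), contradicting the hypothesis. Removing $2$ from row $1$ and placing it at $(2,1)$ yields $S'$; the column condition at $i=1$ is $2>1$, while for $i\geqslant 2$ it reads $S_{2,i-1}>S_{1,i+1}$, which is precisely the failure of the defining inequality at $j=i-1$. The main obstacle is the first step, pinning down $j^*=j_0$ at the boundary where $s$ occupies the final cell of row $1$; the hypothesis $n>2k+1$ is used exactly to rule out this corner case.
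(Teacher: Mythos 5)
Your proof is correct, and it supplies details the paper compresses, but the key step is reached by a different route. To locate the column of row $2$ that receives $s$, the paper simply observes that the entry one column to the left of $s$'s column in row $2$ of $S$ (your $S_{2,j_0+1}$, when it exists) is precisely the entry that sat \emph{directly above} $s=T_{1,j_0+2}$ in the shifted tableau $T$, so column-strictness of $T$ gives $S_{2,j_0+1}>s$ at once and hence $j^*=j_0$ with no case analysis. You instead derive $j^*=j_0$ by contradiction from the maximality of $s$, invoking the hypothesis $n>2k+1$ to dispose of the boundary case where $s$ occupies the last cell of row $1$. Both are valid: the paper's observation is shorter and does not consume the hypothesis at this step (it is needed only so that the shape $(n-k-1,k+1)$ of $S'$ is a genuine partition), while yours is self-contained in terms of $S$ alone and never refers back to $T$. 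Your explicit verification of column-strictness for $i\geqslant j_0+2$ via maximality of $s$, and your treatment of the degenerate case $s=2$ (including the check that $2$ must lie in row $1$ of $S$), make rigorous the steps the paper dismisses with ``by construction, the rows and columns of $S'$ are increasing.''
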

\begin{proof}
Note that in $S'$, $s$ must be placed one column to the left of where it was originally situated in the bottom row of $S$. If $s>2$, this is because the entry in the second row of $S$ and two columns to the left of $s$ is smaller than $s$, but the entry one column to the left (if this cell is nonempty) was above $s$ in $T$, so is larger than $s$. If $s=2$, then $s$ is smaller than every entry in the second row of $S$. By construction, the rows and columns of $S'$ are increasing, so it is an $\SYT$. 
\end{proof}

\begin{example}\label{SS'Ex}
Let \[T = \begin{array}{c@{\hskip \cellsize}c@{\hskip \cellsize}c@{\hskip \cellsize}c@{\hskip \cellsize}c@{\hskip \cellsize}c}\tableau{&4&5&8\\1&2&3&6&7&9}\end{array}\!\!\!\!\!\!. \]

Then  \[S = \begin{array}{c@{\hskip \cellsize}c@{\hskip \cellsize}c@{\hskip \cellsize}c@{\hskip \cellsize}c@{\hskip \cellsize}c}\tableau{4&5&8\\1&2&3&6&7&9}\end{array}\!\!\!\!\!\!. \]
\vspace{0.2 cm}

The largest entry in the bottom row of $S$ that is larger than the entry of the cell above and two spaces to the left is $6$, so here $s=6$. We shift $s$ into the second row to produce 

 \[S' = \begin{array}{c@{\hskip \cellsize}c@{\hskip \cellsize}c@{\hskip \cellsize}c@{\hskip \cellsize}c@{\hskip \cellsize}c}\tableau{4&5&6&8\\1&2&3&7&9}\end{array}\!\!\!\!\!\!. \]
 \vspace{0.2 cm}
 
The reading words of these tableaux are $\mathrm{rw}(S) =458123679 $ and $\mathrm{rw}(S') = 456812379$. One may check that performing Sagan--Worley insertion on these reading words gives insertion tableau $T$. 

\end{example}

\begin{lemma}\label{SS'unique}
Given some $T\in \SShT(n-k,k)$, if $n>2k+1$ then \[\Phi^{-1}(T)=\{\pi\in \mathrm{Av}_n(321): Q(\pi)\in \{S,S'\}\},\] and if $n=2k+1$, then \[\Phi^{-1}(T) = \{\pi\in \mathrm{Av}_n(321): Q(\pi) = S\}.\]
\end{lemma}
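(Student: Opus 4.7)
Since $\pi \in \mathrm{Av}_n(321)$ forces $Q(\pi)$ to have at most two rows (a standard fact about RSK on $321$-avoiding permutations), and since $\Phi(\pi) = R(\mathrm{rw}(Q(\pi)))$ depends only on $Q(\pi)$, the lemma reduces to identifying the two-row $\SYT$s $Q$ of size $n$ for which $R(\mathrm{rw}(Q)) = T$. The plan has two parts: verify that $S$ and $S'$ both satisfy this, and then show no other $Q$ does.

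For the existence direction, I would directly simulate Sagan--Worley insertion on $\mathrm{rw}(S) = S_{2,1}, \ldots, S_{2,k}, S_{1,1}, \ldots, S_{1,n-k}$. The first $k$ entries (strictly increasing) build the first row of the shifted tableau. The next entry $S_{1,1} = 1$ then triggers a cascade of column bumps from the diagonal that propagates through the entire row and extends row~$1$ by a new cell at the right end. Each subsequent bottom-row entry either extends row~$1$ further or row-bumps one row-$1$ entry into row~$2$. Using the inequality $S_{1,i+1} = T_{1,i+1} < T_{2,i+1} = S_{2,i}$, which follows immediately from the column strictness of $T$, one checks inductively that the $(i+1)^{\text{th}}$ bottom-row insertion bumps exactly $S_{2,i}$; hence row~$2$ of the final tableau records the top row of $T$ in order and row~$1$ is the bottom row of $T$, giving $R(\mathrm{rw}(S)) = T$. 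A parallel but slightly more delicate simulation for $\mathrm{rw}(S')$ when $n > 2k+1$ uses the defining inequality $s = S_{1,j+2} > S_{2,j}$: when $s$ finds itself in row~$1$ of the intermediate tableau at the moment that the $(j+2)^{\text{th}}$ bottom-row entry of $S'$ is inserted, that entry is strictly larger than $s$ (since $S'$'s new bottom row is strictly increasing and the entry to the right of $s$ in the bottom row of $S$ was larger than $s$), so $s$ is skipped rather than bumped, while all other bumps proceed exactly as in the $S$ simulation.

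For the uniqueness direction, suppose $Q$ is any two-row $\SYT$ with top row $a_1 < \cdots < a_m$ and bottom row $b_1 < \cdots < b_{n-m}$, and $R(\mathrm{rw}(Q)) = T$. The same framework shows that after inserting the top row and then $b_1 = 1$, the intermediate tableau has row~$1$ equal to $1, a_1, \ldots, a_m$ and empty row~$2$, and each of the remaining $n-m-1$ insertions either extends row~$1$ or bumps one $a_i$ into row~$2$ (no $b$ can bump a $b$, since the $b$'s are inserted in increasing order). Letting $p$ denote the total number of bumps, the final shape is $(m+1+(n-m-1-p),\,p) = (n-k,k)$, which forces $p = k$; combined with the $\SYT$ constraint $n-m \geq m$ and $p \leq m$, this pins down $m \in \{k, k+1\}$. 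When $m = k$, all $a_i$'s are bumped in order to form row~$2$ of $T$, so $a_i = T_{2,i+1}$ and $Q = S$. When $m = k+1$, exactly one $a_j$ survives in row~$1$ of the final tableau. The main obstacle will be to show that this surviving entry must coincide with the $s$ of the definition: one must argue that requiring the bumping sequence to output $T_{2,2}, \ldots, T_{2,k+1}$ in order while keeping $Q$ a valid $\SYT$ forces $a_j$ to sit between $T_{2,j'}$ and $T_{2,j'+1}$ for a unique index $j'$, and matching this with $s = T_{1,j'+2} > T_{2,j'}$ identifies $a_j = s$ and hence $Q = S'$.
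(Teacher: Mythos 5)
Your overall framework---reduce to two-row $\SYT$s $Q$ with $R(\mathrm{rw}(Q))=T$, verify $S$ and $S'$ by simulating Sagan--Worley insertion on their reading words, then classify all admissible $Q$ by counting bumps---is the same as the paper's, and the existence half is essentially sound. The gap is in the uniqueness half, where you claim that $p=k$ together with the constraints $n-m\geq m$ and $p\leq m$ ``pins down $m\in\{k,k+1\}$.'' It does not: those constraints only give $k\leq m\leq \lfloor n/2\rfloor$, which for $n$ large relative to $k$ leaves many possible values of $m$. What you actually need is the upper bound $m\leq k+1$, i.e.\ that \emph{at most one} top-row entry of $Q$ survives unbumped in row~1. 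This is a genuine statement about the insertion dynamics, not a shape count: one shows (by induction on $l$, using the column-strictness $a_l>b_l$ of $Q$) that each $a_l$ is still unbumped when $b_l$ is inserted, so $b_l$ must bump some $a_j$ with $j\leq l$; hence there are at least $m-1$ bumps and $k=p\geq m-1$. The paper supplies exactly this step (``either every element in $u$ is row bumped, or every element but one is row bumped''), and without it your split into the cases $m=k$ and $m=k+1$ is unjustified.

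The second issue is that you explicitly defer what you call ``the main obstacle,'' namely showing that in the $m=k+1$ case the surviving top-row entry must equal $s$. The paper closes this with a two-case analysis on the extra top-row element $p$ of $Q$: if $p<s$, then placing $p$ in the top row violates column-strictness, so $Q$ is not an $\SYT$; if $p>s$, then the maximality of $s$ forces $p$ to land at least two columns left of its former position, so more bottom-row entries of $Q$ are smaller than $p$ than top-row entries are, whence $p$ itself gets row-bumped into row~2 of $R(\mathrm{rw}(Q))$ and the insertion tableau cannot equal $T$. As written, your proposal correctly identifies where the remaining work lies but does not carry it out, so the uniqueness direction is incomplete on both counts.
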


\begin{proof}
To see that $\mathrm{rw}(S)$ is sent to $T$ by Sagan--Worley insertion, note that $\mathrm{rw}(S)$ is two increasing words $uw$, where $u$ is the top row of $T$, and $w$ the bottom row of $T$. It follows that $|w| > |u|$ and $w=12w_3\cdots w_{n-k}$. Performing Sagan--Worley insertion on $u$ will give a single-row insertion tableau whose entries are precisely $u$. Inserting 1 will then column bump everything to the right. Then, for all $i\in [k]$, since $u_i < w_{i+1}$, $u_i$ will be bumped to the second row of the insertion tableau by $w_{i+1}$. Since the entries of $w$ are increasing, none of these will be bumped, and so $R(\mathrm{rw}(S)) = R(uw) = T$.

Next we consider $\mathrm{rw}(S')$. Again we write this as $u'w'$. Now however, $u' = u_1\ldots u_jsu_{j+1}\ldots u_k$, and $w' = 1w_2\ldots w_{j+1}w_{j+3}\ldots w_{n-k}$ for some $j$, i.e. $s= w_{j+2}$. 

It is useful to visualize $T$ with this labeling of entries:

\[T = \begin{array}{c@{\hskip \cellsize}c@{\hskip \cellsize}c@{\hskip \cellsize}c@{\hskip \cellsize}c@{\hskip \cellsize}c}\Ttableau{&u_1&\ldots&u_j&u_{j+1}&\ldots&\ldots &u_k\\1&2&\ldots&w_{j+1}&s&w_{j+3}&\ldots&w_{k+1}&\ldots&w_{n-k}}\end{array}\!\!\!\!\!.\]

As before, $u'$ is an increasing word, so each consecutive element will insert at the end of a single row, meaning the insertion tableau of $u'$ is simply \[R(u') = \begin{array}{c@{\hskip \cellsize}c@{\hskip \cellsize}c@{\hskip \cellsize}c@{\hskip \cellsize}c@{\hskip \cellsize}c}\Ttableau{u_1&\ldots&u_j&s&u_{j+1}&\ldots &u_k}\end{array}\!\!\!\!\!.\]

Note that $w'$ contains the entry 1, and precisely $j$ other elements that are smaller than $s$. Based on their position in $T$, we know that $w_{i+1} < u_i$ for all $i\in[j]$, so $w_2$ will bump $u_1$, $w_3$ will bump $u_2$, and so on, until the insertion tableau has the following form:
\[R(u'w_1\ldots w_{j+1}) = \begin{array}{c@{\hskip \cellsize}c@{\hskip \cellsize}c@{\hskip \cellsize}c@{\hskip \cellsize}c@{\hskip \cellsize}c}\Ttableau{&u_1&\ldots&u_j\\ 1&2&\ldots&w_{j+1}&s&u_{j+1}&\ldots &u_k}\end{array}\!\!\!\!\!.\]

Now $s = w_{j+2}$ is by definition smaller than $w_i$ for $i\in[j+3,n-k]$, so $s$ will not be bumped while inserting the remaining elements of $\mathrm{rw}(S)$. Also, $s$ was chosen as the maximal entry $S_{1,j+2}$ that is larger than $S_{2,j}$, meaning $u_i < w_{i+2}$ for $i\in[j+1,k]$. It follows that every element of $u'$ that is larger than $s$ will be bumped, and $R(u'w') = T$, as desired. 

Since $\mathrm{RSK}$ insertion maps permutations in $\mathrm{Av}_n(321)$ to all pairs of $\SYT$ of height at most 2, it remains to show that no other $\SYT$ with at most two rows has a reading word which is mapped to $T$ under Sagan--Worley insertion. Suppose $P\in \SYT(a,b)$ such that $R(\mathrm{rw}(P)) = T$. As before, $\mathrm{rw}(P) = uw$ for two increasing subwords $u$ and $w$, where $|u| = b$, $|w|=a$, and $a\geqslant b$. For every element $u_i$ in $u$ there is an element $w_i$ in $w$ that is smaller than $u_i$. As $w_1=1$, which will column bump $u_1$ during insertion, either every element in $u$ is row bumped, or every element but one is row bumped, as may occur if $w_{j+1} > u_j$ for some $j$, or if $b=a$. Hence, $b\in[k,k+1]$. It follows that $u$ contains every entry from the top row of $T$, and at most one other element. If $b = k$ then we must have $P=S$, so assume $b=k+1$, and $u$ contains some element $p\neq s$ such that $p$ is an entry in the bottom row of $T$. We have two cases to consider.

Case 1: Suppose $p<s$. Then, since $p$ must be an entry of the first row of $T$, moving the second row of $T$ left by one and then shifting $p$ into this row to create $P$ will either place $p$ above $s$, or place $T_{2,j}$ above $s$, in either instance ensuring that $P$ is not an $\SYT$.

Case 2: Suppose $p>s$. Consider forming $P$ by shifting $p$ up from the bottom row of $S$.  Recall $s$ was the largest entry in the bottom row of $S$ that is larger than the entry above and two cells to the left. This means that in $S$, $p$ is smaller than the entry above and two cells to the left, if this entry exists. When $p$ is placed into the top row of $S$, its position will be at least two cells left of where it was in the bottom row of $S$. This implies that there are more entries in the bottom row of $P$ that are smaller than $p$, than there are entries in the top row of $P$ that are smaller than $p$. This means $p$ will be bumped to the second row of $R(\mathrm{rw}(P))$, and so $R(\mathrm{rw}(P))\neq T$. 

Note that when $n=2k+1$, the $\SYT$ whose reading words map to $T$ under Sagan--Worley insertion must have $k$ entries in the top row. Since these must be the entries that form the second row of $T$, the only $\SYT$ of shape $(k+1,k)$ whose reading word maps to $T$ is $S$.
\end{proof}

Now that we have established what the preimage under $\Phi$ of an arbitrary element from $\SShT(n-k,k)$ is, we are ready to prove Theorem \ref{321case}. 

\begin{proof}[Proof of Theorem \ref{321case}]

Let $T\in \SShT(n-k,k)$. 

Suppose $n > 2k+1$. We wish to count the permutations in $\mathrm{Av}_n(321)$ that are sent to $S$ and $S'$ under $\mathrm{RSK}$ insertion. Since $\mathrm{RSK}$ insertion maps permutations in $\mathrm{Av}_n(321)$ to all pairs of $\SYT$ of height at most 2, this amounts to counting the tableaux in $\SYT(n-k,k)$ and $\SYT(n-k-1,k+1)$. In other words, there are $|\SYT(n-k,k)|$ permutations in $\mathrm{Av}_n(321)$ with recording tableau $S$, and $|\SYT(n-k-1,k+1)|$ with recording tableau $S'$. One can easily show that $|\SYT(a,b)| = {a+b\choose b} - {a+b\choose b-1}$. 

Given a Schur $Q$-function $Q_{(n-k,k)}$, by \eqref{SchurQdefn}, each peak function in the peak expansion of $Q_{(n-k,k)}$ is associated to a specific tableau $T\in \SShT(n-k,k)$. The map $\Phi$ allows us to associate permutations in $\mathrm{Av}_n(321)$ with these tableaux, so we wish to count the number of permutations that are mapped to a specific tableau:

\begin{align*}
    |\{\pi\in \mathrm{Av}_n(321):\Phi(\pi)=T\}| &=|\mathrm{SYT}(n-k-1,k+1)|+|\mathrm{SYT}(n-k,k)|  \\[6 pt] &= {n\choose k+1}-{n\choose k}+{n\choose k} - {n\choose k-1} \\[6 pt] &={n\choose k+1} - {n\choose k-1}.
\end{align*}

If $n=2k+1$ the enumeration is still correct since, in this case, ${n\choose k} = {n\choose k+1}$. This establishes that $|\Phi^{-1}(T)| ={n\choose k+1} - {n\choose k-1}$ for any $T\in \SShT(n-k,k)$. 

Let $\lambda = (n-k,k)$, and let $A(\lambda) = \bigcup_{T\in \SShT(\lambda)} \Phi^{-1}(T)$. Summing peak functions indexed by permutations in $A(\lambda)$, since $\Phi$ preserves peak sets by Proposition \ref{peakpreservephi}, we have 

\begin{align*}\sum_{\pi\in A(\lambda)} K_{\Peak(\pi)} &= \sum_{T\in \SShT(\lambda)} \left( {n\choose k+1} - {n\choose k-1} \right) K_{\Peak(T)}\\[6 pt] &= \left({n\choose k+1} - {n\choose k-1}\right) Q_\lambda.\end{align*} The result follows from summing over $k$ such that $n \geqslant 2k+1$.
\end{proof}

\subsection{The cases with two patterns}
\begin{proposition}\label{Vperms}
\[R_n(132,231) = 2^{n-1}Q_{(n)}.\]
\end{proposition}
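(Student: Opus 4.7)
The plan is to prove that $\mathrm{Av}_n(132,231)$ consists of exactly $2^{n-1}$ permutations, each with empty peak set, and hence each contributing $K_\emptyset = Q_{(n)}$ to the sum defining $R_n(132,231)$.

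First I would establish the identity $Q_{(n)} = K_\emptyset$. The shifted diagram of shape $(n)$ admits only one filling, namely $1\,2\,\cdots n$ in a single row, which has empty descent set and hence empty peak set. Applying \eqref{SchurQdefn} immediately yields $Q_{(n)} = K_{\emptyset}$.

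Next, I would show that $\pi \in \mathrm{Av}_n(132,231)$ forces $\Peak(\pi)=\emptyset$. If $i \in \Peak(\pi)$, then $\pi_{i-1} < \pi_i$ and $\pi_i > \pi_{i+1}$, so the three consecutive entries $\pi_{i-1}\pi_i\pi_{i+1}$ standardize to $132$ when $\pi_{i-1}<\pi_{i+1}$ and to $231$ when $\pi_{i-1}>\pi_{i+1}$, contradicting the avoidance. Conversely, $\Peak(\pi) = \emptyset$ is easily seen to be equivalent to $\pi$ being \textit{V-shaped}: strictly decreasing on $[1,k+1]$ and strictly increasing on $[k+1,n]$ for some $k \in \{0,1,\ldots,n-1\}$, since once $\pi$ records an ascent it can never record a descent. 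In particular the valley entry $\pi_{k+1}$ is the global minimum, forcing $\pi_{k+1}=1$.

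Then I would verify that every V-shaped permutation $\pi$ avoids both $132$ and $231$, by a short case analysis on how three chosen positions $i<j<l$ lie relative to the valley $k+1$. If all three lie in $[1,k+1]$ or all three in $[k+1,n]$, the induced pattern is $321$ or $123$ respectively; in the remaining mixed cases the monotonicity on each side of the valley forces either $\pi_i > \pi_j$ (excluding $132$ and $231$) or $\pi_j < \pi_l$ (again excluding $132$ and $231$); and whenever $j=k+1$ we have $\pi_j = 1$, which cannot be the middle value of a $132$ or $231$ pattern. Combining the two directions, $\mathrm{Av}_n(132,231)$ coincides exactly with the set of V-shaped permutations, all of which have empty peak set.

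Finally I would count V-shaped permutations: the valley is forced to hold $1$, and the decreasing prefix is determined by choosing which subset of $\{2,\ldots,n\}$ precedes $1$, giving $\sum_{k=0}^{n-1}\binom{n-1}{k}=2^{n-1}$ permutations in total. Therefore
\[
R_n(132,231) \;=\; \sum_{\pi\in\mathrm{Av}_n(132,231)} K_{\Peak(\pi)} \;=\; 2^{n-1}\, K_\emptyset \;=\; 2^{n-1}\, Q_{(n)}.
\]
There is no real obstacle; the only point requiring care is the mixed-position case analysis for V-shaped permutations avoiding $132$ and $231$, which is purely combinatorial.
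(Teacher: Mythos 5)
Your proposal is correct and follows essentially the same route as the paper: identify $\mathrm{Av}_n(132,231)$ with the $V$-shaped (peakless) permutations, count them as $2^{n-1}$ by choosing which elements of $\{2,\ldots,n\}$ precede the forced valley entry $1$, and conclude via $K_{\emptyset,n}=Q_{(n)}$. The only difference is that you spell out the equivalence between avoiding $\{132,231\}$ and having empty peak set, which the paper asserts without detailed verification.
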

\begin{proof}
The class $\mathrm{Av}(132,231)$ is precisely the permutations with no peaks. These permutations consist of a subsequence of decreasing elements ending with 1, followed by a subsequence of increasing elements, giving them a distinctive $V$-shape (see Figure \ref{Vshape}). Given $\pi\in \mathrm{Av}_n(132,231)$, let us call the decreasing part $\pi_L$ and the increasing part $\pi_R$. To see that $|\mathrm{Av}_n(132,231)| = 2^{n-1}$, note that once the elements in the decreasing subsequence $\pi_L$ are chosen, the permutation is completely specified, as every remaining element must be placed in increasing order. The smallest element, 1, is defined to be in $\pi_L$, and then there are $n-1$ other elements which may or may not be in $\pi_L$. Hence, there are $n-1$ choices to make and so $|\mathrm{Av}_n(132,231)| = 2^{n-1}.$ The result follows, as \[R_n(132,231) = 2^{n-1}K_{\emptyset,n}= 2^{n-1}Q_{(n)}.\]

\vspace{-0.5 cm} 
\end{proof}

\begin{theorem}\label{132312}
    \[R_n(132,312) = \sum_{k=0}^{\lfloor \frac{n}{2} \rfloor} (n-2k)Q_{(n-k,k)}.\]
\end{theorem}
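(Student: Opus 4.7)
The plan is to reduce the identity to a direct coefficient comparison, after a structural analysis of both sides. First I will show that the permutations in $\mathrm{Av}_n(132,312)$ are in natural bijection with subsets of $[n-1]$. A permutation contains $132$ or $312$ exactly when there exist positions $i<j<k$ with $\pi_k$ strictly between $\pi_i$ and $\pi_j$; so $\pi$ avoids both if and only if, for every $k\geq 2$, $\pi_k$ is either larger than every preceding entry or smaller than every preceding entry. Recording an ``$M$'' or ``$m$'' at each step gives a binary word of length $n-1$, and tracking the evolving range $[\pi_1-|D|,\,\pi_1+(n-1-|D|)]=[1,n]$ forces $\pi_1=|D|+1$, where $D$ is the set of ``$m$'' positions (the descent set). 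Hence the descent map is a bijection $\mathrm{Av}_n(132,312)\to 2^{[n-1]}$, giving
\[
R_n(132,312) \;=\; \sum_{D\subseteq[n-1]} K_{\Peak(D),\,n}.
\]

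Next I will compare the coefficient of each peak function $K_{P,n}$ on the two sides. Fix a valid peak set $P=\{p_1<\cdots<p_j\}\subseteq[2,n-1]$ (so $p_{s+1}\geq p_s+2$), let $p_0=0$, $p_{j+1}=n+1$, and define $g_s = p_{s+1}-p_s-1$ for $s=0,1,\dots,j$. For the LHS, I will verify that $D\subseteq[n-1]$ has $\Peak(D)=P$ precisely when $D\supseteq P$, $D\cap(P-1)=\emptyset$, and within each maximal interval of ``free'' positions the descents form an initial run starting from the leftmost position. The initial interval preceding $p_1$ contributes $g_0$ choices, and each interval immediately to the right of some $p_s$ contributes $g_s$ choices, giving
\[
\bigl|\{D : \Peak(D)=P\}\bigr| \;=\; \prod_{s=0}^{j} g_s.
\]

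For the RHS I will use the key observation that any descent of a two-row standard shifted tableau is automatically a peak: writing $S$ for the set of entries in row $2$, one has $1,2\notin S$ always, and $i\in\Des(T)$ means $i\notin S$ and $i+1\in S$, which precludes $i-1\in\Des(T)$. Thus $\Peak(T)=\Des(T)=\{i:i\notin S,\,i+1\in S\}$, and the condition $\Peak(T)=P$ forces $S$ to be a disjoint union of integer intervals $[p_s+1,\,p_s+1+\ell_s]$ for $s\in[1,j]$, with each $\ell_s\in[0,g_s-1]$ to keep the blocks non-overlapping and bounded by $n$. Writing $k=|S|=j+\sum_s\ell_s$, the RHS coefficient of $K_{P,n}$ becomes
\[
\sum_{\ell_1,\dots,\ell_j}(n-2k) \;=\; \Bigl(\prod_{s=1}^{j} g_s\Bigr)\!\left(n-j-\sum_{s=1}^{j}g_s\right),
\]
and the telescoping identity $\sum_{s=1}^{j} g_s = (p_{j+1}-p_1)-j = n-p_1-j+1$ reduces the bracket to $p_1-1=g_0$. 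So both sides equal $\prod_{s=0}^{j} g_s$, with the edge case $P=\emptyset$ (which forces $k=0$ and $g_0=n$) handled by the same formula.

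The main obstacle is the two-row tableau analysis, in particular pinpointing the block parameterization of $S$ in terms of $P$ and then running the weighted sum $\sum(n-2k)$ through to its collapsed form. Once $\Peak(T)=\Des(T)$ is observed for two-row shifted shapes and the $g_s$-parameterization is identified, the matching of coefficients is a one-line algebraic computation.
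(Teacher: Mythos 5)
Your reduction of the left-hand side is sound: $\mathrm{Av}_n(132,312)$ is exactly the set of permutations in which every entry is a left-to-right maximum or minimum, the descent map to $2^{[n-1]}$ is a bijection, and the number of $D\subseteq[n-1]$ with $\Peak(D)=P$ is indeed $\prod_{s=0}^{j}g_s$. The gap is on the right-hand side. Your parameterization of $\{T\in\SShT(n-k,k):\Peak(T)=P\}$ by the full box $\prod_{s=1}^{j}[0,g_s-1]$ is false: for the set $S$ of row-$2$ entries to yield a \emph{standard} shifted tableau the columns must increase, which forces the $i$-th smallest element of $S$ to be at least $2i+1$, and in addition the shape $(n-k,k)$ must be a strict partition, i.e.\ $k<n-k$. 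Neither constraint follows from ``non-overlapping blocks bounded by $n$.'' Concretely, take $n=6$ and $P=\{2\}$: your box is $\ell_1\in\{0,1,2,3\}$ with weights $4,2,0,-2$, but $\ell_1=1$ gives $S=\{3,4\}$ in shape $(4,2)$, which is not a standard shifted tableau (column $3$ would have $5$ below $4$), and $\ell_1=2,3$ give shapes $(3,3)$ and $(2,4)$ that do not occur in the sum at all. The true coefficient is $4\cdot 1+2\cdot 0=4$, while your box sum $4+2+0-2=4$ agrees only because the spurious positive and negative terms cancel.

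That cancellation is precisely what needs to be proved and does not follow from anything you wrote, so the step ``the RHS coefficient of $K_{P,n}$ becomes $\sum_{\ell_1,\dots,\ell_j}(n-2k)$'' is a genuine gap. It is plausibly repairable by a sign-reversing involution or reflection argument on the $\ell$-tuples violating the column condition (the same device behind $|\SShT(n-k,k)|=\binom{n-1}{k}-\binom{n-1}{k-1}$), but that argument is the real content of the coefficient comparison and is missing. For contrast, the paper sidesteps the issue by working on the permutation side: it applies Sagan--Worley insertion to the inverse class $\mathrm{Av}_n(132,231)$ of V-shaped permutations, labels the entries of a fixed $T\in\SShT(n-k,k)$ so as to exhibit exactly $n-2k$ permutations inserting to $T$, and confirms completeness via the global identity $\sum_k(n-2k)|\SShT(n-k,k)|=2^{n-1}$, so it never has to decide which block configurations are column-admissible.
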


The proof of this theorem involves performing Sagan--Worley insertion on the inverses of permutations in $\mathrm{Av}_n(132,312)$. This is the class $\mathrm{Av}(132,231)$. We will show that these permutations are mapped to shifted tableaux of height at most 2, and by assigning certain labels to entries in a given tableau, we can count how many permutations are sent to each tableau. Before proving the theorem we will describe the labeling. 

Given some $T\in \SShT(n-k,k)$ for $n\geqslant2k+1$, we label the entries in the top row $a_1, a_2,\ldots, a_k$ in increasing order. Then, let $b_k$ be the largest entry from the bottom row of $T$ that is smaller than $a_k$. 
Assign the labels $b_{k-1},\ldots,b_2,b_1$ in decreasing order, where for each $i\in[k-1]$, $b_i$ be the largest entry from the bottom row of $T$ that is smaller than both $a_i$ and $b_{i+1}$. Label the remaining $n-2k$ entries in increasing order as $c_1,\ldots,c_{n-2k}$.

We continue to use the notation $\pi_L$ and $\pi_R$ to identify the decreasing and increasing parts of a permutation $\pi$ in the class $\mathrm{Av}(132,231)$. We will show in the proof of Theorem \ref{132312} that the permutations that are sent to $T$ by Sagan--Worley insertion are those for which $a_i\in \pi_L$ and $b_i\in \pi_R$ for all $i\in[k]$, and $c_1,c_2,\ldots,c_j$ in $\pi_L$ and $c_{j+1},c_{j+2},\ldots,c_{n-2k}\in \pi_R$ for some $1\leqslant j \leqslant n-2k$. Clearly given some $n$ and $k$, there are $n-2k$ such permutations for each filling of $\SShT(n-k,k)$. 

\begin{example}\label{2patternexample}
For the shifted tableau \[T = \tableau{&4&7&8\\1&2&3&5&6&9} \] we have $a_1=4,a_2=7, $ and $a_3=8$. The largest entry from the first row that is smaller than 8 is 6, so this is $b_3$. The largest entry from the bottom row that is smaller than $a_2$ and $b_3$ is 5, so this is $b_2$, and the largest entry smaller than $a_1$ and $b_2$ is 3, so $b_1=3$. The remaining entries are assigned labels in increasing order, $c_1 = 1$, $c_2=2$ and $c_3=9$. 

The three permutations associated with this labeling are then $874123569,$ $874213569,$ and $987421356$ (see Figure \ref{Vshape}). 

\begin{figure}[h]
\begin{center}
\begin{tikzpicture}[scale=0.5]
    
    \foreach \x in {0,11,22} {
        \draw[gray, thin] (\x,0) grid (\x+10,10);
        \draw[black] (\x,0) rectangle (\x+10,10);
    }

    \foreach \i/\y in {1/8, 2/7, 3/4, 4/1, 5/2, 6/3, 7/5, 8/6, 9/9} {
        \fill (\i, \y) circle (6pt);
    }

    \foreach \i/\y in {12/8, 13/7, 14/4, 15/2, 16/1, 17/3, 18/5, 19/6, 20/9} {
        \fill (\i, \y) circle (6pt);
    }

    \foreach \i/\y in {23/9, 24/8, 25/7, 26/4, 27/2, 28/1, 29/3, 30/5, 31/6} {
        \fill (\i, \y) circle (6pt);
    }

    \foreach \i/\y in {4/1, 5/2, 9/9, 16/1, 15/2, 20/9, 28/1, 27/2, 23/9} {
        \fill[red] (\i, \y) circle (6pt);
    }
    % Optional labels
    \node at (5.2, -1.5) {$874123569$};
    \node at (16, -1.5) {$874213569$};
    \node at (27.2, -1.5) {$987421356$};

\end{tikzpicture}
\end{center}
\caption{The permutations in $\mathrm{Av}(132,231)$ with insertion tableau $T$ under Sagan--Worley insertion. The entries $c_i$, which may be in the decreasing or increasing subsequences, are colored red. }\label{Vshape}
\end{figure}
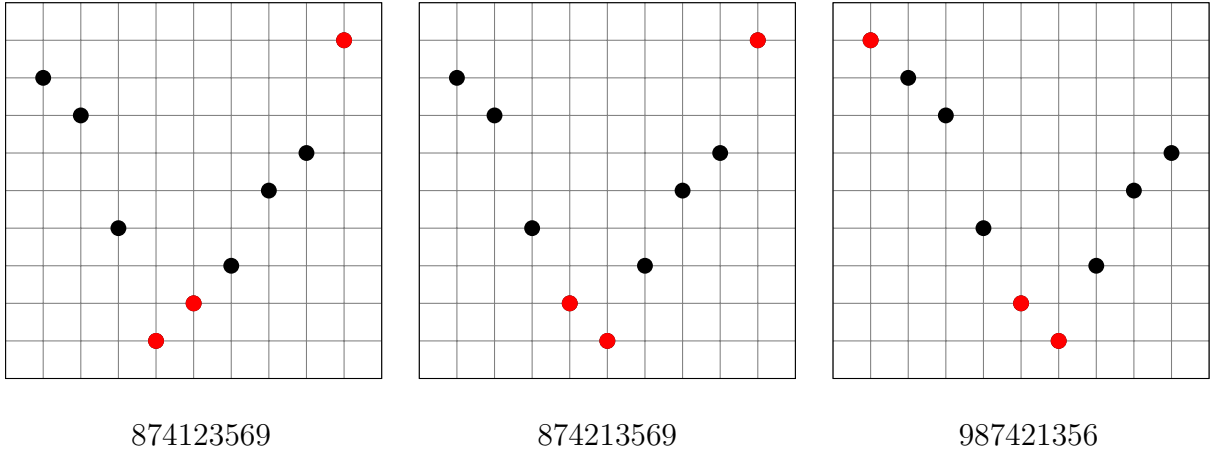
\end{example}

\begin{proof}[Proof of Theorem \ref{132312}]
First, we claim that for $\pi\in \mathrm{Av}_n(132,231)$, Sagan--Worley insertion sends $\pi$ to a tableau with height at most 2. To see this, note that the elements of $\pi_L$ will all insert at the beginning of the first row via column bumping. After inserting the rightmost element of $\pi_L$, 1, we have a single-row tableau. Row bumping may occur when inserting elements from $\pi_R$, but at each insertion, since $\pi_R$ is increasing, any entry that is row bumped will be larger than the last entry that was row bumped. This means that row bumping can only occur once per insertion and so $R(\pi)$ cannot have height more than 2. 

Fix some filling $T\in \SShT(n-k,k)$, for some $n\geqslant 2k+1$. Consider the permutations associated with this tableau filling via the labeling of entries described above.

When we perform Sagan--Worley insertion on these permutations the only elements that may row bump an entry are those from the increasing subsequence, $\pi_R$. This subsequence consists of the $b$ elements, and a (possibly empty) subset of the largest $c$ elements. We claim that when inserting elements of $\pi_R$, $a_i$ will be bumped, for all $i\in [k]$. Note that $b_i+1$ is either some $a_h$ for $h\leqslant i$, or possibly $b_{i+1}$. In particular, $b_i+1\neq c_j$, since we assign the $b$ labels before the $c$ labels and the $b_i$ label is given to the largest entry of $T$ that is less than $a_i$ and has not yet had a label assigned. 

When we insert $b_1$, $b_{2}$ has not yet been inserted, but $a_1$ has, so $a_1$ will be bumped by $b_1$ unless $a_1$ has already been bumped. Inducting on $i$, it follows that every $a_i$ entry will have been bumped to the second row of the insertion tableau following the insertion of $b_i$. Also, none of the entries in the second row are column bumped back to the first row since, as noted previously, every entry that is row bumped to the second row is the largest entry bumped to that row thus far. 

We also must show that no $b$ or $c$ entries are bumped. Clearly no $b_i$ can be bumped, as these are all in $\pi_R$, and every element that is inserted later than $b_i$ is larger than $b_i$. The $c$ elements from $\pi_R$ cannot be bumped for the same reason. To see that no $c_j\in\pi_L$ can be bumped, note that if $c_l\in \pi_R$ then $c_l>c_j$, so $c_l$ will not bump $c_j$. Suppose some $c_j$ was bumped by $b_i<c_j$. Then, in particular, $b_i$ did not bump $a_i$, meaning that some $a$ entry has been bumped by some $c_h\in \pi_R$ such that $c_h < b_i$. However, this implies that $c_j\in \pi_R$, since $c_h < b_i < c_j$. This contradicts our assumption that $b_i$ bumped $c_j$. 

Therefore, we have shown that Sagan--Worley insertion maps $\pi$ to $T$. To show that no permutations other than those labeled as described will map to $T$, we will show that this labeling accounts for every permutation in $\mathrm{Av}_n(132,231)$. We know from the proof of Proposition \ref{Vperms} that $|{\mathrm Av}_n(132,231)| = 2^{n-1}$, so it remains to show that \begin{equation}\label{eqnopeaks}
    \sum_{k=0}^{\left \lfloor \frac{n}{2}\right \rfloor}(n-2k)|\SShT(n-k,k)| = 2^{n-1}.
\end{equation}

One can show using a standard path counting and reflection principle argument that $|\SShT(n-k,k)| = {n-1\choose k}-{n-1 \choose k-1}.$ This allows us to write \eqref{eqnopeaks} as a telescoping sum, which we expand as follows: 
\[
    \sum_{k=0}^{\left \lfloor \frac{n}{2}\right \rfloor}(n-2k)|\SShT(n-k,k)| =\sum_{k=0}^{\left \lfloor \frac{n}{2}\right \rfloor}(n-2k)\left({n-1\choose k}-{n-1 \choose k-1} \right) \]
    
    \[ = n{n-1\choose 0} +(n-2)\left({n-1\choose 1}-{n-1 \choose 0} \right) +\cdots + \left(n - 2\left\lfloor \frac{n}{2}\right\rfloor\right)\left({n-1\choose \left\lfloor \frac{n}{2}\right\rfloor}-{n-1 \choose \left\lfloor \frac{n}{2}\right\rfloor-1} \right) 
\]

\[=2{n-1\choose 0} + 2{n-1 \choose 1} +2{n-1\choose 2}+\cdots+2{n-1 \choose \left\lfloor \frac{n}{2}\right\rfloor-1}+\left(n-2\left\lfloor \frac{n}{2}\right\rfloor\right){n-1 \choose \left\lfloor \frac{n}{2}\right\rfloor}\]

\[=\sum_{k=0}^{n-1}{n-1\choose k} = 2^{n-1}.\]

The penultimate equality is because $2{n-1\choose k} = {n-1\choose k}+{n-1 \choose n-1-k}$ and 

    \[n-2\left\lfloor \frac{n}{2}\right\rfloor = \begin{cases}
        1 & \text{ if $n-1$ is even} \\ 0 &\text{ if $n-1$ is odd.}
    \end{cases} \]

Fix some strict partition $\lambda = (n-k,k)$. For any $T\in \SShT(\lambda)$, there are $n-2k$ entries that we label as $c_i$, for $i\in [n-2k]$. As we have seen, these correspond to $n-2k$ permutations in $\mathrm{Av}_n(132,231)$ which have insertion tableau $T$ under Sagan--Worley insertion. Let $A(T)$ denote the set $\{\pi\in \mathrm{Av}_n(132,312): R(\pi^{-1})=T\}$. If we take $A(\lambda) = \bigcup_{T\in \SShT(\lambda)}A(T)$, then summing peak functions indexed by the peak sets of permutations in $A(\lambda)$ gives \[\sum_{\pi\in A(\lambda)}K_{\Peak(\pi)} = \sum_{T\in \SShT(\lambda)}(n-2k)K_{\Peak(T)} = (n-2k)Q_\lambda.\]
The result follows from summing over all strict partitions of length at most 2.
\end{proof}

Theorem 1.2 in \cite{HPS20} establishes that permutations in $\mathrm{Av}_n(213,231)$, $\mathrm{Av}_n(213,132)$, and $\mathrm{Av}_n(132,123)$ share the same multiset of descent sets. From that result, the following is immediate:

\begin{proposition}\label{pekequiv2pattersn}
    The sets of patterns $\{213,231\}$, $\{213,132\}$, and $\{132,123\}$ are peak-equivalent.
\end{proposition}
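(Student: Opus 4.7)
The plan is to leverage the fact that the peak set of a permutation is completely determined by its descent set, so any equality of descent-set multisets automatically transfers to an equality of peak-set multisets.

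More concretely, the hypothesis (Theorem 1.2 in \cite{HPS20}) tells us that for all $n$,
\[\{\!\{\Des(\pi):\pi\in \mathrm{Av}_n(213,231)\}\!\} = \{\!\{\Des(\pi):\pi\in \mathrm{Av}_n(213,132)\}\!\} = \{\!\{\Des(\pi):\pi\in \mathrm{Av}_n(132,123)\}\!\}.\]
Recall from Section 2 that for any set of integers $S$, we defined $\Peak(S)=\{s\in S: s\neq 1,\ s-1\notin S\}$, which is a well-defined function $\Peak$ from subsets of $[n-1]$ to peak sets in $[n-1]$. Since $\Peak(\pi)$ is by definition $\Peak(\Des(\pi))$, applying $\Peak$ element-wise to a multiset of descent sets yields the corresponding multiset of peak sets.

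Therefore the plan is simply to apply $\Peak$ to each of the three equal multisets of descent sets; applying the same function to equal multisets yields equal multisets. This gives
\[\{\!\{\Peak(\pi):\pi\in \mathrm{Av}_n(213,231)\}\!\} = \{\!\{\Peak(\pi):\pi\in \mathrm{Av}_n(213,132)\}\!\} = \{\!\{\Peak(\pi):\pi\in \mathrm{Av}_n(132,123)\}\!\},\]
which is the definition of peak-equivalence for the three sets of patterns. There is no real obstacle here beyond correctly quoting the cited descent-multiset equality; the argument is a one-line consequence once that input is in hand.
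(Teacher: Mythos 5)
Your proof is correct and matches the paper's argument exactly: the paper also deduces peak-equivalence immediately from the descent-set multiset equality of Theorem 1.2 in \cite{HPS20}, since the peak set is a function of the descent set. Your write-up just makes the "immediate" step explicit.
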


Proposition \ref{pekequiv2pattersn}, Theorem \ref{132312}, and Lemma \ref{reverselemma} together imply the third line of Table 1.

The next result is not part of Theorem \ref{mainthm}, but it follows from Theorem \ref{132312} and Theorem \ref{321case}. 
\begin{corollary}\label{includedeltaj}
For a decreasing permutation $\delta_j$ of size $j$, the pattern-avoiding peak function associated with $\mathrm{Av}(132,312,\delta_j)$ is given by \[R_n(132,312,\delta_j) = \sum_{k=0}^{j-2} (j-1-k)Q_{(n-k,k)}.\]
\end{corollary}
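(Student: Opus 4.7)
My approach is to mirror the proof of Theorem~\ref{132312}, simply layering on the restriction $|\sigma_L|\leqslant j-1$. Since $132$ and $\delta_j$ are their own inverses and $312^{-1}=231$, inversion restricts to a bijection $\mathrm{Av}_n(132,312,\delta_j)\to\mathrm{Av}_n(132,231,\delta_j)$. Combining this with Corollary~\ref{emptythm}, which asserts $\Peak(\pi)=\Peak(R(\pi^{-1}))$, immediately yields
\[
R_n(132,312,\delta_j)=\sum_{\sigma\in\mathrm{Av}_n(132,231,\delta_j)} K_{\Peak(R(\sigma))}.
\]

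Next, I translate the new constraint into the $V$-shape language of Proposition~\ref{Vperms}. Writing $\sigma=\sigma_L\sigma_R$ with $\sigma_L$ decreasing and ending in $1$, and $\sigma_R$ increasing, every element of $\sigma_R$ is strictly larger than $1$, so no element of $\sigma_R$ can be appended to a decreasing subsequence that terminates at the $1$ of $\sigma_L$, and $\sigma_R$ is itself increasing. Consequently the longest decreasing subsequence of $\sigma$ has length exactly $|\sigma_L|$, so $\sigma$ avoids $\delta_j$ if and only if $|\sigma_L|\leqslant j-1$.

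Finally, I feed this restriction through the $a$--$b$--$c$ labelling introduced in the proof of Theorem~\ref{132312}. For each $T\in\SShT(n-k,k)$, the $n-2k$ permutations $\sigma\in\mathrm{Av}_n(132,231)$ with $R(\sigma)=T$ are parametrized by $\ell\in\{1,\ldots,n-2k\}$, where $\ell$ is the number of entries labelled $c_1,\ldots,c_\ell$ that lie in $\sigma_L$; since all $a_i$'s lie in $\sigma_L$ and all $b_i$'s lie in $\sigma_R$, we have $|\sigma_L|=k+\ell$. Imposing $|\sigma_L|\leqslant j-1$ becomes $\ell\leqslant j-1-k$, which leaves exactly $j-1-k$ admissible permutations per $T$ for $k\in\{0,1,\ldots,j-2\}$. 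Summing the peak-function expansions via \eqref{SchurQdefn} gives
\[
R_n(132,312,\delta_j)=\sum_{k=0}^{j-2}\sum_{T\in\SShT(n-k,k)}(j-1-k)K_{\Peak(T)}=\sum_{k=0}^{j-2}(j-1-k)\,Q_{(n-k,k)}.
\]
The only real obstacle is the bookkeeping step: checking that in the relevant range of $n$ and $k$ the count $\min(j-1-k,\,n-2k)$ collapses to $j-1-k$, and that any terms $Q_{(n-k,k)}$ for which $(n-k,k)$ fails to be a strict partition drop out of the sum. Both points are routine once the dictionary supplied by Theorem~\ref{132312} is in hand.
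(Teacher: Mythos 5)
Your proposal is correct and follows essentially the same route as the paper: invert to land in $\mathrm{Av}_n(132,231,\delta_j)$, observe that $\delta_j$-avoidance for a $V$-shaped permutation amounts to $|\sigma_L|\leqslant j-1$, and then restrict the $a$--$b$--$c$ parametrization from Theorem~\ref{132312} to count $j-1-k$ permutations per tableau. Your version is in fact slightly more careful than the paper's, since you explicitly justify that the longest decreasing subsequence of $\sigma$ has length $|\sigma_L|$ and flag the edge cases where $\min(j-1-k,\,n-2k)$ and non-strict shapes matter, both of which the paper passes over silently.
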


\begin{proof}

Let $T\in \SShT(n-k,k)$ for some $n,k$. The permutations within $\mathrm{Av}_n(\{132,312,\delta_j\}^{-1}) = \mathrm{Av}_n(132,231,\delta_j)$ that insert to $T$ under Sagan--Worley insertion are constructed as in Proposition \ref{132312}, but with at most $j-1$ decreasing elements in the left arm of the V-shape. Since $1$ must be included in this, there can be at most $j-2$ other elements, namely $a_i$ for $i\in [k]$, and up to $j-1-k$ other elements from the $c_i$ elements.
\end{proof}

\subsection{The remaining cases.}

We will now adapt a proof from \cite{HPS20} to prove the cases when $\Pi$ is the shuffle product of two sets of permutations. For two elementwise-disjoint sequences $\pi$ and $\sigma$ of length $m$ and $n$ respectively, the \textit{shuffle product} $\pi\shuffle \sigma$ is all sequences of length $m+n$ that have both $\pi$ and $\sigma$ as subsequences. For permutations, we add $m$ to every element of $\sigma$ in order to ensure that the product is a set of permutations. For sets of permutations $\Pi$ and $\Pi'$, we let \[\Pi \shuffle \Pi' = \{\pi\shuffle \sigma: \pi\in \Pi, \sigma\in \Pi'\}.\]

\begin{theorem}\label{shuffleformula}
For any sets of nonempty permutations $\Pi$ and $\Pi'$, and any $n\geq 0$, the following holds: 

    \begin{equation} \label{shuffprodeq}
    R_n(\Pi\shuffle \Pi') = R_n(\Pi') +\sum_{k=0}^{n-1} R_k(\Pi)[K_{\emptyset,1}R_{n-k-1}(\Pi')-R_{n-k}(\Pi')]
    \end{equation}

\end{theorem}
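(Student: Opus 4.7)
My plan is to partition $\mathrm{Av}_n(\Pi \shuffle \Pi')$ using a statistic that matches the structure of the right-hand side, and then reduce everything to the shuffle product formula for peak functions together with a single-element insertion identity for $K_{\emptyset,1}$. First I would unpack the definition of the shuffle product to see that, for $\pi \in \mathfrak{S}_n$, writing $\pi_{\leq j}$ for the subsequence on values at most $j$ and $\mathrm{std}(\pi_{>j})$ for the standardization of the rest, $\pi \in \mathrm{Av}_n(\Pi \shuffle \Pi')$ precisely when for every $j \in \{0,1,\dots,n\}$, either $\pi_{\leq j} \in \mathrm{Av}(\Pi)$ or $\mathrm{std}(\pi_{>j}) \in \mathrm{Av}(\Pi')$. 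Define $k(\pi)$ to be the smallest $k \geq 0$ with $\mathrm{std}(\pi_{>k}) \in \mathrm{Av}(\Pi')$; this exists because $\pi_{>n}$ is empty. The key reduction is that the universal quantifier over $j$ collapses: because $\pi_{>j} \supseteq \pi_{>k(\pi)-1}$ for every $j < k(\pi)$, all those subsequences contain a $\Pi'$-pattern, forcing $\pi_{\leq j} \in \mathrm{Av}(\Pi)$ for every such $j$, and this is equivalent to the single constraint $\pi_{\leq k(\pi)-1} \in \mathrm{Av}(\Pi)$ since avoidance is inherited by subsequences. Hence $\pi \in \mathrm{Av}(\Pi \shuffle \Pi')$ iff $k(\pi) = 0$ (so $\pi \in \mathrm{Av}(\Pi')$) or $k(\pi) = k \geq 1$ and $\pi_{\leq k-1} \in \mathrm{Av}(\Pi)$.

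Decomposing the defining sum for $R_n(\Pi \shuffle \Pi')$ using $k(\pi)$, the $k(\pi) = 0$ stratum contributes $R_n(\Pi')$, and for each $k \geq 1$ I would parametrize the remaining $\pi$ by $\sigma := \pi_{\leq k-1} \in \mathrm{Av}_{k-1}(\Pi)$ and $\tau := \mathrm{std}(\pi_{>k-1}) \in \mathfrak{S}_{n-k+1}$. The condition $k(\pi) = k$ translates into $\tau$ being \emph{pivotal} in the sense that $\tau_{>1} \in \mathrm{Av}(\Pi')$ while $\tau \notin \mathrm{Av}(\Pi')$; the remaining data defining $\pi$ is just an interleaving of $\sigma$ with $\tau + (k-1)$. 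Applying the shuffle product formula for peak functions
\[
K_{\Peak(\sigma)} \cdot K_{\Peak(\tau)} \;=\; \sum_{\pi \in \sigma \shuffle (\tau + (k-1))} K_{\Peak(\pi)},
\]
a standard consequence of Stembridge's enriched $P$-partition theory, the sum over interleavings becomes $K_{\Peak(\sigma)} K_{\Peak(\tau)}$, and the factored contribution of the level-$k$ stratum is $R_{k-1}(\Pi) \cdot \sum_{\tau \text{ pivotal}} K_{\Peak(\tau)}$.

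To identify the pivotal sum, I would apply the same shuffle identity with $K_{\emptyset,1}$: inserting a single $1$ into the shift $\rho + 1$ of each $\rho \in \mathrm{Av}_{l-1}(\Pi')$ realizes every $\tau \in \mathfrak{S}_l$ with $\tau_{>1} \in \mathrm{Av}(\Pi')$ uniquely (via $\rho = \mathrm{std}(\tau_{>1})$), giving $K_{\emptyset,1}\, R_{l-1}(\Pi') = \sum_{\tau \in \mathfrak{S}_l :\, \tau_{>1} \in \mathrm{Av}(\Pi')} K_{\Peak(\tau)}$. Since $\mathrm{Av}_l(\Pi')$ is precisely the subset where $\tau$ itself avoids $\Pi'$ (avoidance is hereditary), subtracting $R_l(\Pi')$ isolates the pivotal sum as $K_{\emptyset,1}R_{l-1}(\Pi') - R_l(\Pi')$. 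Plugging in $l = n-k+1$ and reindexing $k' = k - 1$ then yields the claimed identity. The principal obstacle I anticipate is the clean verification of the quantifier collapse that characterizes $\mathrm{Av}(\Pi \shuffle \Pi')$ in terms of $k(\pi)$; after that step and the peak shuffle identity are secured, the rest is a routine reassembly of the pieces.
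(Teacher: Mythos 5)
Your argument is correct, and it is in substance the same proof as the paper's: the paper simply defers the combinatorial stratification to the proof of Theorem 3.1 in \cite{HPS20} and supplies only the one genuinely new ingredient, namely that $\pi\mapsto K_{\Peak(\pi)}$ is a shuffle-algebra homomorphism (i.e.\ $K_{\Peak(\sigma)}K_{\Peak(\tau)}=\sum_{\rho\in\sigma\shuffle\tau}K_{\Peak(\rho)}$), which is exactly the identity you invoke for both the level-$k$ strata and the $K_{\emptyset,1}$ insertion step. Your write-up just makes explicit the decomposition by the minimal cutoff $k(\pi)$ that the cited proof carries out at the level of the Malvenuto--Reutenauer algebra, so it is a self-contained version of the same approach rather than a different one.
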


The proof of Theorem \ref{shuffleformula} differs from the proof of Theorem 3.1 in \cite{HPS20} in one detail. Rather than reproduce the proof in full, we outline the point of difference between our proof and theirs. The proof in \cite{HPS20} uses the homomorphism $\Phi$ from the Malvenuto--Reutenauer algebra (MR) of formal $\mathbb{Q}-$linear combinations of permutations to $\QSym$, where $\Phi(\pi) = F_{\Des(\pi)}.$ We replace this step of the proof with a map $\varrho: \mathrm{MR}\mapsto \QSym$ given by $\varrho(\pi) = K_{\Peak(\pi)}$.

The product in $\mathrm{MR}$ is the shuffle product of two permutations. The product of two peak functions $K_{S_1,n_1}$ and $K_{S_2,n_2}$ is obtained by taking two permutations $\pi\in \mathfrak{S}_{n_1}$ and $\sigma\in \mathfrak{S}_{n_2}$, such that $\Peak(\pi) = S_1$ and $\Peak(\sigma)=S_2$. Then, \[K_{S_1,n_1}K_{S_2,n_2} = \sum_{\rho\in \pi\shuffle \sigma}K_{\Peak(\rho),n_1+n_2}.\] From this it follows that the map $\varrho$ is a homomorphism. The rest of the proof of Theorem \ref{shuffleformula} is identical to the proof of Theorem 3.1 in \cite{HPS20}.

\begin{theorem}\label{shufprodthm}
If $n\geqslant 2$ and \[\Pi \in \big\{\{123,132,312\},\{123,213,231\},\{321,312,132\},\{213,231,321\}\big\},\]\ then $R_n(\Pi) = 2Q_{(n)}+Q_{(n-1,1)}$. 
\end{theorem}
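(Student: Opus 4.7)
The plan is to recognize each of the four listed sets $\Pi$ as a shuffle product of two smaller sets, and then apply Theorem \ref{shuffleformula}. Unpacking the definition of the shuffle product directly, one verifies that $\{12\}\shuffle\{1\} = \{123,132,312\}$, $\{1\}\shuffle\{12\} = \{123,213,231\}$, $\{1\}\shuffle\{21\} = \{132,312,321\}$, and $\{21\}\shuffle\{1\} = \{213,231,321\}$, which is exactly the list in the statement.

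Next I would plug these decompositions into \eqref{shuffprodeq}. Every nonempty permutation contains the pattern $1$, so $R_k(\{1\}) = 0$ for $k \geq 1$ while $R_0(\{1\}) = 1$; on the other hand $\mathrm{Av}_k(\{12\}) = \{\delta_k\}$ and $\mathrm{Av}_k(\{21\}) = \{\iota_k\}$, and since both $\delta_k$ and $\iota_k$ have empty peak set, we have $R_k(\{12\}) = R_k(\{21\}) = K_{\emptyset,k} = Q_{(k)}$ for $k \geq 1$. When substituted into \eqref{shuffprodeq}, the sum collapses to a single surviving term in each case: if $\Pi' = \{1\}$ only the index $k = n-1$ contributes, while if $\Pi = \{1\}$ only the index $k = 0$ contributes, with the external $R_n(\Pi')$ term canceling against the $-R_{n-k}(\Pi')$ piece at $k = 0$. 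In all four cases the formula reduces uniformly to $R_n(\Pi) = Q_{(n-1)}Q_{(1)}$.

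The final step is to evaluate $Q_{(n-1)}Q_{(1)} = K_{\emptyset,n-1}K_{\emptyset,1}$. The cleanest route within the tools of the excerpt is the product rule for peak functions noted immediately after Theorem \ref{shuffleformula}: taking $\pi = \delta_{n-1}$ and $\sigma = 1$, the $n$ shuffles correspond to the insertions of the element $n$ into one of the $n$ positions of the decreasing word $(n-1)(n-2)\cdots 21$. A short descent computation shows that insertion at either extreme position yields an empty peak set, whereas insertion at internal position $i \in \{2,\ldots,n-1\}$ yields peak set $\{i\}$, so $Q_{(n-1)}Q_{(1)} = 2K_{\emptyset,n} + \sum_{i=2}^{n-1}K_{\{i\},n}$. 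Comparing this with the peak expansions $Q_{(n)} = K_{\emptyset,n}$ and $Q_{(n-1,1)} = \sum_{i=2}^{n-1}K_{\{i\},n}$ (obtained from \eqref{SchurQdefn} by enumerating the $n-2$ elements of $\SShT((n-1,1))$, each determined by the entry placed in the single cell of the second row) gives $Q_{(n-1)}Q_{(1)} = 2Q_{(n)} + Q_{(n-1,1)}$ for $n \geq 3$; the case $n = 2$ is handled directly by observing that $\mathrm{Av}_2(\Pi) = \mathfrak{S}_2$, both elements having empty peak set.

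I do not expect any single step to be a serious obstacle, since the shuffle-product structure of the four sets is the main observation and the rest is mechanical. The most delicate point is the bookkeeping for \eqref{shuffprodeq}: the formula is asymmetric in $\Pi$ and $\Pi'$, so one must check separately that the two orientations (\textbf{$\{1\}$ on the left} versus \textbf{$\{1\}$ on the right}) both collapse to the same product $Q_{(n-1)}Q_{(1)}$.
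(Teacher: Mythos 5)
Your proof is correct and follows essentially the same route as the paper: identify each $\Pi$ as a shuffle product with $\{1\}$, collapse \eqref{shuffprodeq} to $K_{\emptyset,n-1}K_{\emptyset,1}$, and evaluate that product via the peak-function shuffle rule to get $2Q_{(n)}+Q_{(n-1,1)}$. The only (immaterial) differences are that you shuffle $n$ into $\delta_{n-1}$ where the paper uses $\iota_{n-1}$, and you spell out both orientations of the asymmetric formula, which the paper leaves implicit.
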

\begin{proof}
Each of these sets of patterns is the shuffle product of a pattern of size 2 with a pattern of size 1, for example $R_n(123,132,312)=R_n(12\shuffle 1).$ Let $\Pi'=1$ and note that since the only permutation that avoids $\Pi'$ is the empty permutation, and $K_{\emptyset,0}=1$ by definition, we have $R_k(1) = \delta_{k,0}$, the Kronecker delta. If $n\geq 2$, then \eqref{shuffprodeq} becomes \[R_n(12\shuffle 1) = R_{n-1}(12)K_{\emptyset,1} = K_{\emptyset,n-1}K_{\emptyset,1}\]
This product of these peak functions is evaluated by placing $n$ into every possible position of the permutation $\iota_{n-1}$ and considering the peak sets. Placing $n$ at the first or last position creates two permutations with peak set $\emptyset$, and every other placement creates a peak set $\{m\}$ for $m\in [2,n-1]$. These correspond to fillings of $\SYT(n-1,1)$. The result follows. 
\end{proof}

For the cases in line 5 of Table \ref{mainthm} that are not shuffles, we will construct peak-preserving bijections between two of these and $\{123,132,312\}$, which was shown to have a symmetric pattern-avoiding peak function in Theorem \ref{shufprodthm}. We outline these bijections now. 
\begin{proposition}\label{peakequivshuff}
    The sets of patterns $\{123, 132, 312\},~\{132, 213, 321\},$ and $\{132, 213, 312\}$ are all peak-equivalent. 
\end{proposition}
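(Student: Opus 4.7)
The plan is to show peak-equivalence by explicitly classifying the permutations in each of the three avoidance classes and then reading off their peak sets; the peak-preserving bijections requested in the statement will then follow by matching parameters with identical peaks.

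First I would describe each class. For $\mathrm{Av}_n(\{123,132,312\})$, observe that $\{123,132,312\}=12\shuffle 3$, so $\pi$ avoids it iff, for every ascent $\pi_a<\pi_b$, the value $\pi_b$ exceeds every entry of $\pi$; equivalently, deleting $n$ from $\pi$ leaves the decreasing sequence on $[n-1]$. This gives the $n$ permutations obtained by inserting $n$ into $(n-1)(n-2)\cdots 21$ at some position $q\in[n]$. For $\mathrm{Av}_n(\{132,213,321\})$, I would condition on the position $p$ of the entry $1$: avoiding $213$ through $1$ forces entries left of $1$ to exceed those right of $1$, while avoiding $132$ on the right of $1$ and avoiding $321$ on the left of $1$ force both sides to be increasing, producing the $n$ permutations $(n-p+2)(n-p+3)\cdots n\cdot 1\cdot 2\cdot 3\cdots(n-p+1)$ for $p\in[n]$. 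For $\mathrm{Av}_n(\{132,213,312\})$ the same initial analysis applies, but avoiding $312$ through $1$ additionally forces $1$ to occur at position $1$ or $n$; a short recursion then shows that $\pi$ must have the form $k(k+1)\cdots n(k-1)(k-2)\cdots 1$ for some $k\in[n]$.

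Next I would compute the descent sets and read off the peak sets via $\Peak(S)=\{s\in S:s>1,\,s-1\notin S\}$. The descent sets are contiguous in each class: for the $q$-parameterized family, $\Des(\pi)=[n-1]\setminus\{q-1\}$ when $1<q<n$, while the boundary cases give $[n-1]$ or $[n-2]$; for the $p$-parameterized family, $\Des(\pi)=\{p-1\}$, empty when $p=1$; for the $k$-parameterized family, $\Des(\pi)=\{n-k+1,\ldots,n-1\}$, empty when $k=1$. Applying the peak definition, every interior parameter produces a singleton peak set ($\{q\}$, $\{p-1\}$, and $\{n-k+1\}$, respectively), while the two boundary parameters in each class produce the empty peak set. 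Hence in all three cases the multiset of peak sets equals $\{\!\!\{\emptyset,\,\emptyset,\,\{2\},\,\{3\},\,\ldots,\,\{n-1\}\}\!\!\}$, establishing peak-equivalence; explicit peak-preserving bijections are obtained by matching parameters so that singleton peaks agree---for instance, sending the insertion-at-$q$ permutation in $\mathrm{Av}_n(\{123,132,312\})$ to the permutation $k(k+1)\cdots n(k-1)\cdots 1$ in $\mathrm{Av}_n(\{132,213,312\})$ with $k=n-q+1$ for interior $q$, and pairing the two empty-peak boundary permutations arbitrarily.

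The main obstacle will be the structural classification of each class---verifying in the forward direction that the three patterns together force the claimed shape, and checking in the backward direction that each described permutation really does avoid all three patterns. Once the closed-form descriptions are in hand, the peak computations and the parameter matching reduce to straightforward bookkeeping.
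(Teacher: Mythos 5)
Your proposal is correct and takes essentially the same route as the paper: both classify each of the three avoidance classes explicitly as an $n$-element family parameterized by the position of an extreme entry, and then match peak sets across the families (the paper phrases this as explicit peak-preserving bijections $\pi\mapsto(\iota_{n-k-1}+k)n\delta_k$ and $\pi\mapsto(\iota_{n-k-1}+k)n\iota_k$, while you compare the peak multisets directly). One small off-by-one to fix in the $\{132,213,321\}$ family: the two permutations with empty peak set are those with $p\in\{1,2\}$ (since $\Des=\{1\}$ has no peaks, whereas $p=n$ gives peak set $\{n-1\}$), not the two boundary values $p\in\{1,n\}$; the multiset $\{\!\!\{\emptyset,\emptyset,\{2\},\ldots,\{n-1\}\}\!\!\}$ you arrive at is nonetheless correct.
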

\begin{proof}
If $\pi\in \mathrm{Av}_n(123,132,312)$, then $\pi$ has at most one ascent, and all elements of $\pi$ to the left of $n$ are larger than all elements to the right of $n$. This means that $\pi$ has the form \[\pi = (\delta_{n-k-1}+k) n \delta_k \] for some $k\in [n-1].$
One can similarly write down specifications for the form of permutations in the other two classes. These make it clear that to specify a particular permutation within these classes, one only needs to know the location of the largest element, $n$. From there, it is simple to see that the map \[\pi \mapsto (\iota_{n-k-1}+k)n\delta_k\] is a peak-preserving bijection between $\mathrm{Av}_n(123,132,312)$ and $\mathrm{Av}_n(132,213,312)$, and the map 
\[\pi \mapsto (\iota_{n-k-1}+k)n\iota_k\] is a peak-preserving bijection between $\mathrm{Av}_n(123,132,312)$ and $\mathrm{Av}_n(132,213,321)$.
\end{proof}
The bijections from Proposition \ref{peakequivshuff} combined with Theorem \ref{shufprodthm} establish that \[R_n(132,213,321) = R_n(132,213,312) = 2Q_{(n)} + Q_{(n-1,1)}.\]
For the remaining two cases, $\{123,231,312\}$ and $\{213,231,312\}$, note that these are the reversals of $\{132,213,321\}$ and $\{132,213,312\}$, so by Lemma \ref{reverselemma}, Theorem \ref{shufprodthm}, and Proposition \ref{peakequivshuff}, we have shown line 5 of Table \ref{mainthm} to be true.

\begin{proposition}\label{nQncase}
For $\Pi \in \big\{\{123,132,231\},\{132,213,231\}\big\}$, $R_n(\Pi) = nQ_{(n)}$. 
\end{proposition}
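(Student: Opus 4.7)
Both pattern sets share the subset $\{132, 231\}$, so my plan is to start by noting that $\mathrm{Av}_n(\Pi) \subseteq \mathrm{Av}_n(132, 231)$. By Proposition \ref{Vperms}, every such permutation is V-shaped (a decreasing run followed by an increasing run), hence has empty peak set. Since $K_{\emptyset, n} = Q_{(n)}$ via \eqref{SchurQdefn}, the sum collapses to
\[R_n(\Pi) = |\mathrm{Av}_n(\Pi)|\,Q_{(n)},\]
and it remains only to show that $|\mathrm{Av}_n(\Pi)| = n$ in each case.

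For $\Pi = \{123, 132, 231\}$, I would write a V-shape as $\pi_L \pi_R$ with $\pi_L$ decreasing and ending in $1$, and $\pi_R$ increasing. If $\pi_R$ contains two elements $b_1 < b_2$, then the subsequence $1\,b_1\,b_2$ (taking $1$ from the end of $\pi_L$) realizes the pattern $123$. Hence $|\pi_R| \leq 1$, giving one all-decreasing permutation plus $n-1$ permutations obtained by choosing a single element of $\pi_R$ from $\{2, \ldots, n\}$; in total exactly $n$ permutations.

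For $\Pi = \{132, 213, 231\}$, the idea is to characterize when a V-shape contains $213$ by a short case analysis on how the three pattern positions split between $\pi_L$ and $\pi_R$. The crucial observation is that the trailing $1$ of $\pi_L$ is always available to play the role of the middle entry of a $213$: whenever some $a \in \pi_L \setminus \{1\}$ satisfies $a < b$ for some $b \in \pi_R$, the triple $a, 1, b$ gives a $213$ pattern. Conversely, if every element of $\pi_L \setminus \{1\}$ exceeds every element of $\pi_R$, one checks directly that neither the ``two in $\pi_L$'' nor the ``one in $\pi_L$, two in $\pi_R$'' configuration can realize $213$. Avoidance of $213$ therefore forces $\pi_R = \{2, 3, \ldots, m+1\}$ and $\pi_L = \{m+2, \ldots, n\} \cup \{1\}$ in decreasing order, parametrized by $m \in \{0, 1, \ldots, n-1\}$, again giving exactly $n$ permutations.

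The only nontrivial step is the case analysis for $\{132, 213, 231\}$, in particular verifying the ``one in $\pi_L$, two in $\pi_R$'' configuration under the domination condition; this is immediate since an intermediate value $a$ would have to be non-$1$ yet smaller than an element of $\pi_R$, contradicting the hypothesis. Everything else is routine enumeration, and combining the two counts yields $R_n(\Pi) = nQ_{(n)}$ in both cases.
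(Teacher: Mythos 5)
Your proof is correct and takes essentially the same route as the paper's: both reduce to the observation that $\mathrm{Av}_n(132,231)$ consists of peakless V-shaped permutations, so $R_n(\Pi)=|\mathrm{Av}_n(\Pi)|\,Q_{(n)}$, and then count exactly $n$ permutations in each class (via the size of the increasing arm in the first case, and the position of $1$ in the second). Your case analysis for $\{132,213,231\}$ merely spells out a characterization that the paper states without proof.
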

\begin{proof}

Both of these sets of patterns include 132 and 231, and permutations that avoid 132 and 231 have no peaks. If $\pi\in  \mathrm{Av}_n(123,132,231)$, then $\pi$ cannot have two consecutive ascents, meaning it must have $n-2$ descents, followed by either an ascent or another descent. The former case gives us $n-1$ permutations, and the latter, just $\delta_n$. Hence, $|\mathrm{Av}_n(123,132,231)| = n$ and $R_n(123,132,231) = nQ_{(n)}$. 

If $\pi\in \mathrm{Av}_n(132,213,231)$, then $\pi$ consists of descents followed by ascents, where every element of $\pi$ to the right of 1 is smaller than every element of $\pi$ to the left of 1. Hence $\pi$ is completely determined by the position of 1, giving $n$ permutations in this class, all of which have peak set $\emptyset$. 
\end{proof}
Combined with Lemma \ref{reverselemma}, Proposition \ref{nQncase} proves the $6^{th}$ line of Table \ref{mainthm}.

\begin{proposition}
For $\Pi \in \big\{ \{123,132,213,231\},\{123,132,231,312\},\{132,213,231,312\}\big\}$, $R_n(\Pi) = 2Q_{(n)}$.
\end{proposition}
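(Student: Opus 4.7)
The plan is to show, for each of the three sets $\Pi$, that $\mathrm{Av}_n(\Pi)$ consists of exactly two permutations and that both have empty peak set; then linear independence of the peak functions together with the identity $K_{\emptyset,n} = Q_{(n)}$ gives $R_n(\Pi) = 2Q_{(n)}$ immediately. In each case the argument is a short pattern analysis keyed on the position of a distinguished entry ($n$ or $1$).

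For $\Pi = \{123,132,213,231\}$, I would locate the position of $n$. If two distinct elements $a<b$ both precede $n$, the subsequence $abn$ or $ban$ realises one of the forbidden patterns $123$ or $213$, so at most one element precedes $n$. If a single element $a$ does precede $n$ at position two, then for any $\pi_j$ with $j\geq 3$ the subsequence $(a,n,\pi_j)$ standardises to $132$ or $231$, both forbidden. Hence for $n\geq 3$ we have $\pi_1=n$, and the suffix $\pi_2\cdots\pi_n$ lies in $\mathrm{Av}_{n-1}(\Pi)$, since any size-$3$ subsequence involving the leading $n$ standardises to $312$ or $321$. The base case $\mathrm{Av}_3(\Pi)=\{312,321\}$ is immediate, and induction yields the two permutations $\delta_n$ and $n(n-1)\cdots 312$, both of which have empty peak set by a direct descent computation.

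For $\Pi=\{123,132,231,312\}$, I would locate the position $p$ of $1$. Cross subsequences $(\pi_i,1,\pi_k)$ with $i<p<k$ standardise to $213$ (allowed) or $312$ (forbidden), forcing $\pi_i<\pi_k$ and hence $\{\pi_1,\ldots,\pi_{p-1}\}=\{2,\ldots,p\}$, $\{\pi_{p+1},\ldots,\pi_n\}=\{p+1,\ldots,n\}$. A subsequence $(\pi_i,\pi_k,\pi_j)$ with $i<k<p<j$ standardises to $123$ or $213$, and avoidance of $123$ forces the left block to be strictly decreasing. A subsequence $(\pi_i,\pi_j,\pi_k)$ with $i<p<j<k$ standardises to $123$ or $132$, both forbidden, so the right block contains at most one element; after ruling out $p=1$ for $n\geq 3$ (since $(1,\pi_j,\pi_k)$ would then itself be $123$ or $132$), we conclude $p\in\{n-1,n\}$, yielding exactly the two permutations $\delta_n$ and $(n-1)(n-2)\cdots 21\,n$, each with empty peak set.

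For $\Pi=\{132,213,231,312\}$, the cross subsequence $(\pi_i,1,\pi_k)$ with $i<p<k$ now always standardises to a forbidden pattern ($213$ or $312$), so $p=1$ or $p=n$. When $p=1$, the subsequences $(1,\pi_j,\pi_k)$ standardise to $123$ (allowed) or $132$ (forbidden), forcing the suffix to be strictly increasing and $\pi=\iota_n$; symmetrically, $p=n$ forces $\pi=\delta_n$, and both have empty peak set. The main obstacle across all three cases is the careful cross-block pattern bookkeeping when the distinguished entry partitions the permutation; once the constraints are extracted, the enumeration and peak-set computation are routine.
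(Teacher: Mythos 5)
Your proof is correct and takes essentially the same route as the paper: explicitly determine $\mathrm{Av}_n(\Pi)$ (which has two elements in each case, except $\mathrm{Av}_n(132,213,231,312)=\{\iota_n,\delta_n\}$), observe every permutation there has empty peak set, and conclude $R_n(\Pi)=2K_{\emptyset,n}=2Q_{(n)}$. The paper only writes out the first case and declares the others similar, so your version is simply a fully detailed rendering of the same argument.
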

\begin{proof}
For $\Pi = \{123,132,213,231\}$, allowable patterns are $312,321$. Therefore, $\mathrm{Av}_n(\Pi) = \{\delta_n,n(n-1)\ldots 312\}$. Both of these have empty peak set. The other cases are similar, and have been omitted. 
\end{proof}
Combining the above proposition with Lemma \ref{reverselemma} we get the $7^{th}$ line in Table \ref{mainthm}. 
\begin{proposition}
For $\Pi = \{123,132,213,231,312\}$, $R_n(\Pi) = Q_{(n)}$.  
\end{proposition}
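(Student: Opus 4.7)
The plan is to pin down $\mathrm{Av}_n(\Pi)$ exactly and then compute the peak set of its single element. Observe that $\Pi$ contains every permutation in $\mathfrak{S}_3$ except $321$. Hence, for any $\pi\in\mathrm{Av}_n(\Pi)$ with $n\geqslant 3$, every length-$3$ subsequence of $\pi$ must standardize to $321$, i.e. every triple of entries of $\pi$ is strictly decreasing. The first step is therefore to conclude that $\pi=\delta_n$, since otherwise some adjacent pair $\pi_i<\pi_{i+1}$ would together with any third entry produce a pattern in $\Pi$.

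Next I would compute $\Peak(\delta_n)$. Since $\Des(\delta_n)=\{1,2,\ldots,n-1\}$, every candidate peak $s\geqslant 2$ has $s-1\in\Des(\delta_n)$, so $s$ fails the peak condition. Thus $\Peak(\delta_n)=\emptyset$ and $R_n(\Pi)=K_{\emptyset,n}$.

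Finally, it remains to identify $K_{\emptyset,n}$ with $Q_{(n)}$. Applying \eqref{SchurQdefn} to the strict partition $\lambda=(n)$, the set $\SShT((n))$ consists of the single row $1\,2\,\cdots\,n$, whose descent set (and hence peak set) is empty. Therefore $Q_{(n)}=K_{\emptyset,n}=R_n(\Pi)$, as claimed.

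There is no real obstacle here; the only point to be a little careful about is justifying that a single ascent anywhere in $\pi$ already produces one of the forbidden patterns, which is immediate for $n\geqslant 3$ since any third entry combines with the ascent to give one of $123,132,213,231,312$.
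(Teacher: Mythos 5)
Your proof is correct and takes essentially the same route as the paper, which simply observes that $\mathrm{Av}_n(\Pi)=\{\delta_n\}$ for $n\geqslant 3$ and that $\delta_n$ has empty peak set; you merely spell out the intermediate justifications (every triple must standardize to $321$, and $K_{\emptyset,n}=Q_{(n)}$ via \eqref{SchurQdefn}) that the paper leaves implicit.
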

\begin{proof}
For any $n\geq 3$, $\mathrm{Av}_n(\Pi) = \delta_n$, which has peak set $\emptyset$. 
\end{proof}
Combining the previous proposition with Lemma \ref{reverselemma} gives the $8^{th}$ and final line of Table \ref{mainthm}, completing the proof of Theorem \ref{MainTheoremText}.

\section{Further directions}
A common question in algebraic combinatorics involves investigating under which circumstances some class of symmetric or quasisymmetric functions is Schur-positive. Although the problem of Schur $Q$-positivity which we have considered in this work is less studied, it is arguably no less interesting from a purely combinatorial perspective. Here we have only considered avoidance classes which avoid subsets of $\mathfrak{S}_3$. We make the following conjecture:
\begin{conjecture}\label{monotoneschurqpos}
    For any increasing permutation $\iota_k$, the pattern-avoiding peak function $R_{n}(\iota_k)$ is symmetric and Schur $Q$-positive for all $n\geq 0$. 
\end{conjecture}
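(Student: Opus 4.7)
The plan is to realize $R_n(\iota_k)$ as the image under Stembridge's peak map of a Schur-positive symmetric function. Define the linear map $\theta : \QSym \to \PQSym$ on the fundamental basis by $\theta(F_{\alpha}) = K_{\Peak(\set(\alpha)),\,|\alpha|}$. Since $\Peak(\pi)$ is by definition the peak set of $\Des(\pi)$, we have $\theta(F_{\Des(\pi)}) = K_{\Peak(\pi),n}$ for every $\pi \in \mathfrak{S}_n$, so summing over $\pi \in \mathrm{Av}_n(\iota_k)$ and using linearity of $\theta$ gives
\[
R_n(\iota_k) = \theta\bigl(\mathcal{Q}_n(\iota_k)\bigr),
\]
where $\mathcal{Q}_n(\iota_k)$ is the pattern quasisymmetric function of \cite{HPS20}.

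The next step is to evaluate $\mathcal{Q}_n(\iota_k)$ using the Robinson--Schensted--Knuth correspondence. By Schensted's theorem, $\pi \in \mathrm{Av}_n(\iota_k)$ exactly when the shape $\gamma$ of $P(\pi)$ satisfies $\gamma_1 < k$. Using the standard fact $\Des(\pi) = \Des(Q(\pi))$, I would group the sum defining $\mathcal{Q}_n(\iota_k)$ by the recording tableau and apply Gessel's formula $s_\gamma = \sum_{Q \in \SYT(\gamma)} F_{\Des(Q)}$ to obtain the Schur-positive expansion
\[
\mathcal{Q}_n(\iota_k) = \sum_{\substack{\gamma \vdash n \\ \gamma_1 < k}} |\SYT(\gamma)|\, s_\gamma.
\]
Applying $\theta$ term-by-term then yields
\[
R_n(\iota_k) = \sum_{\substack{\gamma \vdash n \\ \gamma_1 < k}} |\SYT(\gamma)|\, \theta(s_\gamma).
\]

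The final and main step is to show that $\theta(s_\gamma) = \sum_{Q \in \SYT(\gamma)} K_{\Peak(Q),n}$ is both symmetric and Schur $Q$-positive for every partition $\gamma$. This quasisymmetric function is precisely the enriched $P$-partition generating function of the Young diagram poset of shape $\gamma$ in the sense of Stembridge \cite{stembridge1997enriched}, and that theory is the natural setting in which to establish $\theta(s_\gamma) \in \Sym \cap \PQSym$ together with a positive expansion $\theta(s_\gamma) = \sum_\lambda d_{\gamma\lambda} Q_\lambda$ with $d_{\gamma\lambda} \in \Z_{\geqslant 0}$. This positivity is the main obstacle: it can be verified directly for small $n$ and $\gamma$, but a uniform combinatorial proof---ideally one exhibiting, for each $\gamma$, a peak-preserving correspondence from $\SYT(\gamma)$ into $\bigsqcup_\lambda \SShT(\lambda)$ whose fibre sizes depend only on the shape $\lambda$, in the spirit of the map $\Phi$ used in Theorem \ref{321case}---is what would settle the conjecture, and would simultaneously produce an explicit Schur $Q$-expansion of $R_n(\iota_k)$.
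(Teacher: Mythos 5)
The statement you are addressing is left as a conjecture in the paper (Conjecture \ref{monotoneschurqpos}), supported only by the tabulated data, so there is no proof of it in the paper to compare against; your proposal must stand on its own, and as written it does not close the argument. The reduction itself is correct and genuinely useful: $\theta(F_\alpha) = K_{\Peak(\set(\alpha)),|\alpha|}$ does give $R_n(\Pi) = \theta(\mathcal{Q}_n(\Pi))$ for any $\Pi$, and RSK together with Schensted's theorem gives $\mathcal{Q}_n(\iota_k) = \sum_{\gamma_1<k}|\SYT(\gamma)|\,s_\gamma$, hence $R_n(\iota_k) = \sum_{\gamma_1<k}|\SYT(\gamma)|\,\theta(s_\gamma)$. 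Moreover, the symmetry half of the conjecture really does follow from this, provided you state explicitly (rather than gesture at) why $\theta$ is a ring homomorphism: since $K_{\Peak(\pi)}K_{\Peak(\sigma)} = \sum_{\rho\in\pi\shuffle\sigma}K_{\Peak(\rho)}$ --- the same product rule the paper invokes in proving Theorem \ref{shuffleformula} --- and $F_{\Des(\pi)}F_{\Des(\sigma)} = \sum_{\rho\in\pi\shuffle\sigma}F_{\Des(\rho)}$, the map $\theta$ is multiplicative, and $\theta(h_n) = K_{\emptyset,n} = Q_{(n)}$ forces $\theta(\Sym)$ into the span of the $Q_\lambda$, which lies in $\Sym$. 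That much should be written out, not deferred.

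The genuine gap is the final step, which you yourself flag: you do not prove that $\theta(s_\gamma) = \sum_{T\in\SYT(\gamma)}K_{\Peak(T)}$ is Schur $Q$-positive, and nothing in the enriched $P$-partition theory you cite hands you this for free --- that theory yields the expression $\sum_T K_{\Peak(T)}$ and, via the homomorphism, its symmetry, but not the positivity of its $Q$-expansion. Positivity cannot be waved through: the paper's own example $R_6(1234,1243,2413,3142,3412,4123)$ shows that a nonnegative sum of peak functions can have a negative $Q$-coefficient. Note also that your target is strictly stronger than the conjecture, which only requires $\sum_{\gamma_1<k}|\SYT(\gamma)|\,\theta(s_\gamma)$ to be positive; a failure for a single $\gamma$ would break your route without refuting the conjecture. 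What is missing --- and what you correctly identify but do not supply --- is a peak-set-preserving analogue of the paper's map $\Phi$ and Lemma \ref{SS'unique} valid for shapes with more than two rows, decomposing the peak-set multiset of $\SYT(\gamma)$ into copies of the peak-set multisets of the $\SShT(\mu)$. If the $Q$-positivity of $\theta(s_\gamma)$ is already in the literature, a citation would complete the proof; absent that, what you have is a correct and valuable reformulation of the conjecture, not a proof of it.
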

We know from Theorem 1.2 of \cite{zhou2024equidistribution} that peak sets are equidistributed between $\mathrm{Av}_n(\iota_k)$ and $\mathrm{Av}_n(\delta_k)$, so Conjecture \ref{monotoneschurqpos} at least passes the first hurdle of satisfying Lemma \ref{reverselemma}. It seems likely that some generalization of Theorem \ref{321case} may work to establish this conjecture, although exactly which $\SYT$ map to which $\SShT$ is not as obvious. The expansions of $R_n(\iota_k)$ into Schur $Q$-functions for $k\in\{4,5\}$ and $n\in[9]$ are given in an appendix following this section.

In Lemma \ref{reverselemma} we establish a necessary, but not sufficient, condition to determine when $\Pi$ is symmetric. A natural question to ask is the following:

\begin{question}
Is there any property of $\Pi$ that provides a sufficient condition to determine if $R_n(\Pi)$ is symmetric and/or Schur $Q$-positive?
\end{question}

It may also be interesting to consider the question of Schur-positivity or Schur $Q$-positivity for other set-valued statistics. We bring attention to one such open problem, the Schur-positivity of \textit{big descents}, which Elizalde, Rivera, and Zhuang  \cite{elizalde2024counting} conjecture to hold in $\mathrm{Av}_n(\iota_k)$ for all $k$. 

\section*{Acknowledgements} The author would like to thank their supervisor Dominic Searles for offering advice regarding exposition in this article.

\section*{Appendix: Data for Conjecture \ref{monotoneschurqpos}}

\vspace{-0.2 cm}
\begin{table}[H]
\centering

\!\!\!\!\!\!\!\!\begin{tabular}{|p{0.25cm}|l|}
\hline
  \(\mathbf{n}\) & \(\mathbf{R_n(1234)}\) \\ \hline
  1 & $Q_{(1)}$ \\[6pt]
  2 & $2Q_{(2)}$ \\[6pt]
  3 & $4Q_{(3)}+2Q_{(2,1)}$ \\[6pt]
  4 & $7Q_{(4)}+8Q_{(3,1)}$\\[6pt]
  5 & $11Q_{(5)}+20Q_{(4,1)}+16Q_{(3,2)}$\\[6pt]
  6 & $16Q_{(6)}+40Q_{(5,1)}+61Q_{(4,2)}+15Q_{(3,2,1)}$\\[6pt]
  7 & $22Q_{(7)}+70Q_{(6,1)}+155Q_{(5,2)}+91Q_{(4,3)} +77Q_{(4,2,1)}$ \\[6pt]
  8 & $29Q_{(8)}+112Q_{(7,1)}+323Q_{(6,2)}+344Q_{(5,3)} +232Q_{(5,2,1)}+168Q_{(4,3,1)}$\\[6pt]
  9 & $37Q_{(9)}+168Q_{(8,1)}+595Q_{(7,2)}+891Q_{(6,3)} +456Q_{(5,4)}+555Q_{(6,2,1)}+744Q_{(5,3,1)}+168Q_{(4,3,2)}$\\
  
  \hline
\end{tabular}
\end{table}
\begin{table}[h]
\centering

\!\!\!\!\!\!\!\!\!\!\!\!\!\!\!\!\begin{tabular}{|p{0.25cm}|l|}
\hline
  \(\mathbf{n}\) & \(\mathbf{R_n(12345)}\) \\ \hline
  1 & $Q_{(1)}$ \\[6pt]
  2 & $2Q_{(2)}$ \\[6pt]
  3 & $4Q_{(3)}+Q_{(2,1)}$ \\[6pt]
  4 & $8Q_{(4)}+8Q_{(3,1)}$\\[6pt]
  5 & $15Q_{(5)}+24Q_{(4,1)}+16Q_{(3,2)}$\\[6pt]
  6 & $26Q_{(6)}+59Q_{(5,1)}+80Q_{(4,2)}+16Q_{(3,2,1)}$\\[6pt]
  7 & $42Q_{(7)}+125Q_{(6,1)}+259Q_{(5,2)}+160Q_{(4,3)} +112Q_{(4,2,1)}$ \\[6pt]
  8 & $64Q_{(8)}+237Q_{(7,1)}+664Q_{(6,2)}+769Q_{(5,3)} +448Q_{(5,2,1)}+384Q_{(4,3,1)}$\\[6pt]
  9 & $93Q_{(9)}+413Q_{(8,1)}+1461Q_{(7,2)}+2441Q_{(6,3)} +1329Q_{(5,4)}+1344Q_{(6,2,1)}+2217Q_{(5,3,1)}+768Q_{(4,3,2)}$\\
  
  \hline
\end{tabular}
\end{table}

\bibliographystyle{abbrv} 
\bibliography{Arxiv_Version}

\end{document}